\newtheorem{theorem}{Theorem}
\newtheorem{theorem*}{Theorem}
\newtheorem*{definition}{Definition}
\newtheorem{lemma}[theorem]{Lemma}
\newtheorem{prop}[theorem]{Proposition}
\newtheorem{claim}[theorem]{Claim}
\newcommand{\col}{\mathrm{Col}}
\newcommand{\add}{\mathrm{Add}}
\newcommand{\la}{\langle}
\newcommand{\ra}{\rangle}
\newcommand{\shi}{\text{\usefont{U}{min}{m}{n}\symbol{'127}}}
\DeclareFontFamily{U}{min}{}
\DeclareFontShape{U}{min}{m}{n}{<-> udmj30}{}
\newcommand{\bbA}{\mathbb{A}}
\newcommand{\bbB}{\mathbb{B}}
\newcommand{\bbP}{\mathbb{P}}
\newcommand{\bbQ}{\mathbb{Q}}
\newcommand{\bbR}{\mathbb{R}}
\newcommand{\bbS}{\mathbb{S}}
\newcommand{\bbU}{\mathbb{U}}
\newcommand{\calB}{\mathcal{B}}
\newcommand{\calG}{\ensuremath{\mathcal{G}}}
\newcommand{\p}{\mathcal{P}}
\DeclareMathOperator{\dom}{dom}
\title{Comparing forcing approaches to dense ideals}
\author{Monroe Eskew}
\address{Kurt G\"odel Research Center,
University of Vienna,
Vienna, Austria.}
\email{monroe.eskew@univie.ac.at}
\thanks{This work was supported by the Austrian Science Fund (FWF) through Projects P34603 and PIN1355423.}
\begin{document}
\begin{abstract}
    We analyze some posets involved in forcing constructions for dense ideals, showing that the Anonymous Collapse and the Dual Shioya Collapse are equivalent for collapsing a large cardinal to $\omega_2$.  We also give a somewhat simplified construction of a normal ideal $I$ on $\omega_2$ such that $\p(\omega_2)/I \sim \col(\omega_1,\omega_2)$.
\end{abstract}
\maketitle

\section{Introduction}
For a regular cardinal $\kappa$, a $\kappa$-complete ideal $I$ on $\kappa$ is said to be \emph{saturated} if $\p(\kappa)/I$ has the $\kappa^+$-c.c., which is the smallest possible chain condition when $\kappa$ is a successor cardinal.
The study of saturated ideals on successor cardinals was initiated by Kunen \cite{kunen}, who proved that it is consistent relative to a huge cardinal that there is a saturated ideal on $\omega_1$.  If $I$ is a saturated ideal on a successor cardinal $\kappa$, and $G \subseteq \p(\kappa)/I$ is generic, then one can form the generic ultrapower embedding $j : V \to M \subseteq V[G]$, where $\kappa$ is the critical point of the elementary embedding $j$, and $M$ is a transitive ${<}j(\kappa)$-closed subclass of $V[G]$.  Thus it is natural to say that $\kappa$ is ``generically almost-huge'', since it satisfies the definition of almost-hugeness except that the target model is constructed in a generic extension.  For this reason, Kunen \cite{kunen} suggested that the consistency strength of the existence of such ideals might be that of an almost-huge cardinal.  This was refuted by Foreman-Magidor-Shelah \cite{fms}, who showed that one can force the existence of a saturated ideal on $\omega_1$ from a supercompact cardinal.  This was later reduced by Shelah-Woodin \cite{woodinshelah} to a Woodin cardinal, and Jensen-Steel \cite{js} proved that a Woodin cardinal is necessary.

However, at the time of this writing, the upper bound for the consistency strength of saturated ideals on successor cardinals above $\omega_1$ has still not been reduced below an almost-huge cardinal.  Magidor modified Kunen's approach to reduce the bound from huge to almost-huge (see \cite[Section 7.11]{foremanhandbook}).  Several variations on Kunen's forcing, most notably by Laver \cite{laver}, Foreman \cite{moresat}, and Shioya \cite{shioyaeaston}, have been presented, which obtain some combination of stronger combinatorial properties of the ideal, simultaneous occurrence of saturated ideals on many cardinals, and a simplification of the forcing.

A kind of ``ultimate'' saturation property is density.  We say a $\kappa$-complete ideal $I$ on a successor cardinal $\kappa$ is \emph{$\kappa$-dense}, or simply \emph{dense} for brevity, if $\p(\kappa)/I$ has a dense subset of size $\kappa$.  Most of the known variations on Kunen's approach do not obtain this property.  In unpublished work in the late 70s, Woodin gave a forcing argument for obtaining a dense ideal on $\omega_1$ from an almost-huge cardinal.  Although it shared some general features with Kunen-style constructions, it differed in a few important ways, one of which was the involvement of a choiceless model as an intermediate step in the argument.  A version of this argument was presented by the author in his thesis \cite{thesis} and in \cite{eskewdense}, in a way that generalizes to get dense ideals on arbitrary successors of regular cardinals, as well as analogous ideals over $\p_\kappa(\lambda)$.

For dense ideals on $\omega_1$, quite different approaches have been developed using weaker large cardinal assumptions.  Woodin \cite{woodinbook} showed that $\mathrm{NS}_{\omega_1}$ being dense is equiconsistent with $\mathsf{ZF+AD}$, which is equiconsistent with $\mathsf{ZFC}$ plus infinitely many Woodin cardinals.  For one direction, Woodin forced over a model of $\mathsf{ZF+AD}$ $+$ $V = L(\bbR)$, obtaining a model of $\mathsf{ZFC}$ $+$  ``$\mathrm{NS}_{\omega_1}$ is dense'' $+$ $2^\omega = \omega_2$.  By Shelah \cite{shelahdense}, $\mathrm{NS}_{\omega_1}$ being dense requires $\neg\mathsf{CH}$.
To obtain a dense ideal on $\omega_1$ along with $\mathsf{CH}$, Woodin forced over a model of $\mathsf{ZF}$ $+$ $\mathsf{AD}_\bbR$ $+$ ``$\theta$ is regular'' with a different forcing.  This was later shown to be optimal by Adolf-Sargsyan-Trang-Wilson-Zeman \cite{sargsyan}.  Recently, Lietz \cite{lietz} showed how to force $\mathrm{NS}_{\omega_1}$ to be dense from a model of $\mathsf{ZFC}$ with an inaccessible limit of supercompact cardinals, answering a question of Woodin.  Like in the case of saturation, all of these arguments are specific to $\omega_1$.

Regarding dense ideals on higher cardinals, a big mystery discussed in \cite{eskewdense} was how to get dense ideals on adjacent cardinals simultaneously, for example at $\omega_1$ and $\omega_2$.  The problem is that the density property is so sensitive that it doesn't seem to leave much room to vary the forcing (in contrast to Kunen's approach), and the forcing lacked a certain uniformity that would be helpful for iterating it to obtain dense ideals at several cardinals.

The first progress in a long time on this problem came in a recent paper of Shioya \cite{shioyadense}.  He replaced the abstract arguments about complete Boolean algebras of \cite{eskewdense} with a more structured partial order, and he eliminated the use of choiceless models.  Sensing the promise of this approach, the author worked with Hayut on a modification, adding a bit more combinatorial structure, that ultimately resulted in the solution to the problem of dense ideals on adjacent cardinals \cite{eh}.  Indeed, we obtain a model of $\mathsf{ZFC}$ in which for every regular cardinal $\kappa$, there is a $\kappa^+$-complete ideal $I$ on $\kappa^+$ such that $\p(\kappa)/I$ has a dense set isomorphic to the simple Levy collapse $\col(\kappa,\kappa^+)$.  This answered several questions of Foreman \cite{foremanhandbook}.

The purpose of this note is to compare the main forcings of  \cite{eskewdense} and \cite{eh}, which are respectively called the Anonymous Collapse $\bbA(\mu,\kappa)$ and the Dual Shioya Collapse $\shi(\mu,\kappa)$.  For technical reasons, $\shi(\mu,\kappa)$ is well-defined only when $\mu$ is a regular uncountable cardinal, while $\bbA(\mu,\kappa)$ makes sense for $\mu = \omega$.  In each case, a projection is defined from $\col(\mu,{<}\kappa) * \dot\add(\kappa)$ to the respective forcing.  $\bbA(\mu,\kappa)$ has a certain minimality property that ensures it is a subforcing of $\shi(\mu,\kappa)$.  It is natural to ask whether these two forcings are actually equivalent.  We show:


\begin{theorem}
\label{specialmu}
For inaccessible $\kappa$, $\bbA(\omega_1,\kappa)$ and $\shi(\omega_1,\kappa)$ are forcing-equivalent.
\end{theorem}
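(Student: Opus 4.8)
The plan is to promote the complete embedding $e\colon \bbA(\omega_1,\kappa)\to\shi(\omega_1,\kappa)$ coming from the minimality property of $\bbA$ — the one witnessing that $\bbA(\omega_1,\kappa)$ is a subforcing of $\shi(\omega_1,\kappa)$ — to a \emph{dense} embedding. Equivalently, I would show that whenever $G$ is $\bbA(\omega_1,\kappa)$-generic over $V$, the quotient $\shi(\omega_1,\kappa)/e[G]$ is trivial in $V[G]$. Both posets carry projections from $\bbP:=\col(\omega_1,{<}\kappa)*\dot\add(\kappa)$, say $\pi_{\bbA}$ and $\pi_{\shi}$, and the minimality of $\bbA$ should be arranged so that $\pi_{\shi}$ factors as $e\circ\pi_{\bbA}$ on a dense subset of $\bbP$; this lets the comparison of $\shi(\omega_1,\kappa)$ and $\bbA(\omega_1,\kappa)$ be carried out condition-by-condition inside $\bbP$.

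Concretely, for each $s\in\shi(\omega_1,\kappa)$ I would fix $(p,\dot a)\in\bbP$ with $\pi_{\shi}(p,\dot a)\le s$ and set $a_s:=\pi_{\bbA}(p,\dot a)$. Commutativity gives $e(a_s)\ge\pi_{\shi}(p,\dot a)$, so $e(a_s)$ is always compatible with $s$, and the whole content of the theorem is that one can arrange $e(a_s)\le s$ in the separative quotient — i.e.\ that $s$ stipulates nothing beyond $a_s$. Since projections are generally far from injective, I would first pass to convenient dense subsets of $\bbP$, $\bbA(\omega_1,\kappa)$, and $\shi(\omega_1,\kappa)$ of size $\kappa$ (or choose the lifts $(p,\dot a)$ minimally), using the inaccessibility of $\kappa$ for the $\kappa$-c.c.\ and the size-$\kappa$ density that keep the separative quotients tractable.

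The crux is understanding exactly what a $\shi(\omega_1,\kappa)$-condition records over and above the corresponding $\bbA(\omega_1,\kappa)$-condition: this is the additional combinatorial scaffolding added in \cite{eh} to make the construction iterable — roughly a labeling on an index set whose size is governed by the base cardinal $\mu$, together with some extra bookkeeping. I would show that for $\mu=\omega_1$ this data is redundant, i.e.\ determined modulo the equivalence defining the separative quotient by the rest of the condition: the relevant index sets are countable, $\kappa$ inaccessible leaves room below $\kappa$ to realize any such countable piece of bookkeeping, and $2^{<\omega_1}=2^{\omega}$ together with the ${<}\omega_1$-closure of $\col(\omega_1,{<}\kappa)$ below $\kappa$ kills the would-be ambiguity in the labeling. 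Once this is shown, $s\mapsto a_s$ descends to an isomorphism of separative quotients and the theorem follows.

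I expect this last step to be the main obstacle: it needs a careful analysis of the defining data of $\shi(\omega_1,\kappa)$-conditions, and it is presumably where the hypothesis $\mu=\omega_1$ is genuinely used — the extra structure of $\shi$ is only cheap enough to absorb when the index sets in play are countable, so for larger regular $\mu$ the analogous equivalence is plausibly false (or open). By contrast, I expect inaccessibility of $\kappa$ to play only the soft role of supplying the standard chain-condition and cardinal-arithmetic facts about $\col(\omega_1,{<}\kappa)$ and $\add(\kappa)$.
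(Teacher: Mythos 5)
Your high-level target is the right one (showing that the quotient of $\shi(\omega_1,\kappa)$ over an $\bbA(\omega_1,\kappa)$-generic is trivial, i.e.\ that $\bbA$ sits densely in the Boolean algebra of $\shi$), but the mechanism you propose for it rests on a misreading of what a $\shi(\omega_1,\kappa)$-condition is, and it omits the two ingredients that actually carry the proof. A condition of $\shi(\mu,\kappa)$ is a triple $\la\bbP,p,\tau\ra$ where $\bbP\in V_\kappa$ is a completely $\mu$-closed poset of arbitrary size below $\kappa$ and $\tau$ is a $\bbP$-name for a binary sequence of some length $\gamma<\kappa$; there is no ``labeling on a countable index set,'' so the claim that the extra data beyond the $\bbA$-information is countable bookkeeping which $2^{<\omega_1}=2^\omega$ and ${<}\omega_1$-closure render redundant has nothing to attach to. Moreover, the assertion that $e(a_s)\le s$ for an $a_s$ obtained from an arbitrary lift of $s$ to $\col(\omega_1,{<}\kappa)*\dot\add(\kappa)$ is exactly the theorem restated, not a step toward it: a single condition $\la\bbP,p,\tau\ra$ really does carry information (a specific poset and name) not visible to any one Boolean value about $\dot X$, and the density of $\bbA$ has to be extracted generically, not condition-by-condition from arbitrary lifts.

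What the paper actually does, and what your outline has no substitute for, is: (i) pass to a dense set of \emph{coding} conditions in the quotient $\shi(\omega_1,\kappa)/H$ (Claim~\ref{equivcond}), where the Projection Freezing Lemma makes the local generic $G_\bbP^\tau$ uniquely recoverable from $X\cap\alpha$, so that any two coding conditions of the same length are mutually interpretable and hence equivalent in the separative quotient; and (ii) for two such conditions of different lengths $\alpha\le\beta$, use that $V[X\cap\beta]$ is a $\sigma$-strategically closed extension of $V[X\cap\alpha]$ (strategic closure, unlike countable closure, passes to subforcings), then invoke Theorem~\ref{iythm} (Ishiu--Yoshinobu) to upgrade to strong $\omega_1$-strategic closure and Lemma~\ref{strataloon} (the McAloon-type result) to identify the quotient with $\col(\omega_1,\beta)$; this is what produces the $\omega_1$-continuous projection from a dense subset of $\bbQ$ to $\bbP$ witnessing compatibility of the two conditions inside $\shi(\omega_1,\kappa)/H$. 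This is also where $\mu=\omega_1$ is genuinely used --- for larger $\mu$ one still gets a complete embedding with $\mu$-strategically closed quotient, but not strong $\mu$-strategic closure, so Lemma~\ref{strataloon} fails to apply --- whereas your explanation of the role of $\mu=\omega_1$ (countability of index sets) and your relegation of the collapse-equivalence machinery to ``soft'' background both miss where the actual work happens.
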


In the construction of a dense ideal using $\bbA(\mu,\kappa)$, the quotient forcing between $\bbA(\mu,\kappa)$ and $\col(\mu,{<}\kappa) * \dot\add(\kappa)$ plays a role.  This is somewhat unfortunate, as the combinatorial structure of this quotient is unclear.  In \cite{eh}, this issue is avoided through the use of a certain ``uniformization'' forcing $\bbU$, which extracts a useful subset of the generic for $\shi(\mu,\kappa)$.  It turns out that the analysis behind Theorem~\ref{specialmu} leads naturally to an elimination of these devices in an alternative argument for a dense ideal on $\omega_2$, which we present here.

\begin{theorem}
\label{omega2}
    Suppose $\kappa$ is almost-huge with target $\lambda$.  Then $\shi(\omega_1,\kappa) * \dot\col(\kappa,{<}\lambda)$ forces that there is a normal ideal $I$ on $\omega_2$ such that $\p(\omega_2)/I$ is forcing-equivalent to $\col(\omega_1,\omega_2)$.
\end{theorem}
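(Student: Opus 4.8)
The plan is to mimic the classical Kunen-style argument for producing a saturated (here, dense) ideal, but using $\shi(\omega_1,\kappa)$ in place of the simple Levy collapse $\col(\omega_1,{<}\kappa)$, and exploiting the projection from $\col(\omega_1,{<}\kappa)*\dot\add(\kappa)$ onto $\shi(\omega_1,\kappa)$ that is given in the excerpt. First I would fix an almost-huge embedding $j\colon V\to M$ with critical point $\kappa$, $j(\kappa)=\lambda$, and $M$ closed under ${<}\lambda$-sequences. The forcing $\shi(\omega_1,\kappa)*\dot\col(\kappa,{<}\lambda)$ should be arranged so that $j$ lifts: since $\shi(\omega_1,\kappa)$ projects from $\col(\omega_1,{<}\kappa)*\dot\add(\kappa)$, which is a small forcing relative to $\lambda$ and (in $M$) an initial segment of $j(\shi(\omega_1,\kappa))=\shi(\omega_1,\lambda)$, the tail of $j$ applied to the first factor lands in $M$ and can be absorbed. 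Here is where the key structural fact about $\shi$ enters: $j(\shi(\omega_1,\kappa))$ restricted below the relevant condition should factor as $\shi(\omega_1,\kappa)$ followed by a $\kappa$-closed (in the extension) remainder, which $\col(\kappa,{<}\lambda)$ can capture. Thus in $V[G*H]$ we obtain a lifted embedding $j\colon V[G*H]\to M[G*H*K]$ with the generic $K$ for the tail built generically.

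The ideal $I$ on $\omega_2=\kappa$ is then defined in the usual way from the embedding: $A\in I$ iff $\kappa\notin j(A)$ — more precisely, $I$ is the ideal on $\kappa$ induced by the master condition, i.e. $A\in I$ iff the condition forcing $\kappa\in j(A)$ is incompatible with the relevant master condition, so that $\p(\kappa)/I$ embeds densely into the quotient of $j(\shi(\omega_1,\kappa)*\dot\col(\kappa,{<}\lambda))$ below the master condition. Normality follows from the usual normal-fineness argument using that $j(f)(\kappa)$ ranges over $M$ for $f\colon\kappa\to V$, combined with the ${<}\lambda$-closure of $M$. The heart of the matter is to identify the quotient forcing $\p(\kappa)/I$ with $\col(\omega_1,\omega_2)$ on the nose. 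This is where I expect the analysis behind Theorem~\ref{specialmu} to be used: the point of that theorem is that $\shi(\omega_1,\kappa)$ and $\bbA(\omega_1,\kappa)$ coincide, and $\bbA$ was designed precisely so that the relevant master-condition quotient is (isomorphic to) a Levy collapse. One computes that below the master condition, $j(\shi(\omega_1,\kappa))$ together with the tail collapse presents $\kappa^{+V[G*H]}=\lambda$ to have size $\omega_2$, so the quotient is a forcing of size $\omega_2$ that is ${<}\omega_2$-closed and collapses $\omega_2$ to $\omega_1$ — hence, by homogeneity and the standard characterization, isomorphic to $\col(\omega_1,\omega_2)$.

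To carry this out carefully I would proceed in steps: (1) recall the projection $\pi\colon\col(\omega_1,{<}\kappa)*\dot\add(\kappa)\to\shi(\omega_1,\kappa)$ and the analogous $\pi_\lambda$ in $M$, and verify the coherence $j\circ\pi = \pi_\lambda\circ j$ on the relevant domain; (2) show $G*H$ generates a condition $q$ in $j(\shi(\omega_1,\kappa)*\dot\col(\kappa,{<}\lambda))$ that is a master condition for $j\rest V$, using ${<}\lambda$-closure of $M$ and the chain condition/closure of the tail; (3) build the ideal $I$ from $q$ and check $\kappa$-completeness and normality via the standard generic-ultrapower criteria applied to functions in $V[G*H]$; (4) analyze the quotient $j(\shi(\omega_1,\kappa)*\dot\col(\kappa,{<}\lambda))\rest q$, invoking the factorization of $\shi(\omega_1,\lambda)$ through $\shi(\omega_1,\kappa)$ and Theorem~\ref{specialmu}, to see it is a ${<}\omega_1$-closed, ${<}\omega_2$-distributive (in fact ${<}\omega_2$-closed after the first $\omega_1$-sized part) homogeneous forcing of size $\omega_2$ collapsing $\omega_2$, hence isomorphic to $\col(\omega_1,\omega_2)$; (5) conclude $\p(\omega_2)/I\cong\col(\omega_1,\omega_2)$ in $V[G*H]$. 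The main obstacle is step (4): making precise the factorization of $j(\shi(\omega_1,\kappa))$ below $q$ and matching the tail $\col(\kappa,{<}\lambda)$ against it so that the combined quotient is \emph{exactly} a Levy collapse rather than merely forcing-equivalent to one on a dense set — this is precisely the subtlety that the $\bbA$-vs-$\shi$ comparison of Theorem~\ref{specialmu} is meant to handle, and the bookkeeping of which coordinates of the collapse come from $H$ versus from the generic quotient will need care. A secondary point requiring attention is verifying that no cardinals between $\omega_1$ and $\kappa$ survive, i.e. that $\shi(\omega_1,\kappa)*\dot\col(\kappa,{<}\lambda)$ indeed makes $\kappa=\omega_2$, which follows from the collapsing properties already built into $\shi(\omega_1,\kappa)$ (it collapses everything below $\kappa$ to $\omega_1$) together with $\col(\kappa,{<}\lambda)$.
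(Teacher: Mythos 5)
Your overall architecture (lift $j$, define $I$ from the lifted embedding, identify $\p(\kappa)/I$ with the collapse) matches the paper in spirit, but the step you yourself flag as ``the heart of the matter'' --- step (4) --- is where the proposal has a genuine gap, and the way you propose to fill it would not work. First, your description of the quotient is internally inconsistent: a ${<}\omega_2$-closed forcing adds no new $\omega_1$-sequences and hence cannot collapse $\omega_2$ to $\omega_1$; what you presumably want is countable closure plus size $\kappa$ plus collapsing, to invoke the McAloon-style characterization. But that is exactly the obstruction here: the relevant quotient/remainder forcings attached to $\shi(\omega_1,\kappa)$ are only known to be $\sigma$-\emph{strategically} closed (strategic closure, unlike closure, passes to subforcings), so the ``standard characterization'' of $\col(\omega_1,\omega_2)$ does not apply off the shelf. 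This is why the paper proves the strategic-closure version of McAloon (Lemma~\ref{strataloon}, with Theorem~\ref{iythm}) and, more importantly, does not analyze an abstract quotient at all: it builds an explicit strategy $\Sigma$ for $\calG^{\mathrm{I}}_{\omega_1}(\shi(\omega_1,\kappa))$ whose moves are coding conditions with everywhere-defined commuting projections, uses the Lemma~\ref{strataloon} construction to turn $\shi(\omega_1,\kappa)\times\col(\omega_1,\kappa)$ into a completely $\omega_1$-closed tree $T'$, and then uses $T'$ as the first coordinate of a concrete master condition $\la T',\la\bbP,p,\tau\ra,\tau_X\ra$ in $\shi(\omega_1,\lambda)$ (Claim~\ref{mc}). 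Your appeal to Theorem~\ref{specialmu} ($\bbA\cong\shi$) does not substitute for this: the paper's Section 4 is explicitly a \emph{direct} argument avoiding the $\bbA$-route, and knowing $\bbA(\omega_1,\kappa)\cong\shi(\omega_1,\kappa)$ does not by itself tell you the structure of $j(\shi(\omega_1,\kappa))$ below a master condition.

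Second, your absorption claim --- that $j(\shi(\omega_1,\kappa))=\shi(\omega_1,\lambda)$ below the master condition factors as $\shi(\omega_1,\kappa)$ followed by a remainder that $\col(\kappa,{<}\lambda)$ ``captures'' --- is asserted, not proved, and it runs the projection in the wrong direction. The paper never factors $\shi(\omega_1,\lambda)$; instead it uses the universal property of Lemma~\ref{absorption}: since $T'*\dot\col(\kappa,{<}\lambda)$ is a reasonable $(\omega_1,\lambda)$-collapse, the two-step forcing $[T'*\dot\col(\kappa,{<}\lambda)]*\dot\add(\lambda)$ projects onto $\shi(\omega_1,\lambda)$. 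The generic for $\shi(\omega_1,\lambda)$ containing $j[G*H]$ is then realized only after forcing with the \emph{extra} factor $\col(\omega_1,\kappa)$ over $V[G*H]$ (and hand-building an $\add(\lambda)$-generic $K$ with $K\rest\kappa=\tau_X^G$, using $M^{<\lambda}\subseteq M$ and $j(\lambda)<(\lambda^+)^V$); your outline omits this external $\col(\omega_1,\kappa)$ entirely, yet it is precisely the forcing with which $\p(\kappa)/I$ gets identified. Finally, the identification is not by homogeneity: the ideal is defined in $V[G*H]$ by $A\in I$ iff $1\Vdash_{\col(\omega_1,\kappa)}\kappa\notin j(A)$, and the map $e([A]_I)=||\kappa\in j(A)||$ is shown directly to be an isomorphism onto $\calB(\col(\omega_1,\kappa))$. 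Without the tree $T'$, the master condition of Claim~\ref{mc}, and the external $\col(\omega_1,\kappa)$, your steps (2) and (4) have no proof.
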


However, we don't know how to get dense ideals on $\omega_1$ and $\omega_2$ simultaneously without the uniformization forcing $\bbU$.

\section{Canonicity of collapsing with $\sigma$-strategically closed posets}

In this section, we present a generalization of a result of Foreman \cite{foremangames}.  It is a well-known result of McAloon (see \cite[Section 4, Theorem 1]{Grigorieff}) that for every regular cardinal $\kappa$, every $\kappa$-closed poset $\bbP$ that collapses its own size to $\kappa$ is forcing-equivalent to the canonical Levy collapse $\col(\kappa,|\bbP|)$.  For $\kappa = \omega_1$, we can weaken the closure assumption to strategic closure.  This will be useful because, unlike in the case of countable closure, $\sigma$-strategic closure is always inherited by subforcings.

For a poset $\bbP$ and an ordinal $\delta$, we define two games $\mathcal{G}^\mathrm{I}_\delta(\bbP)$ and $\mathcal{G}^\mathrm{II}_\delta(\bbP)$.  Two players alternate playing elements of $\bbP$ in a descending sequence, with Player I making the first move.  At limit stages, Player I plays first in $\mathcal{G}^\mathrm{I}_\delta(\bbP)$, and Player II plays first in $\mathcal{G}^\mathrm{II}_\delta(\bbP)$.  Player II wins if the game lasts for $\delta$-many rounds; otherwise, Player I wins.  For a cardinal $\kappa$, we say that $\bbP$ is \emph{$\kappa$-strategically closed} if Player II has a winning strategy for $\mathcal{G}^\mathrm{II}_\kappa(\bbP)$, and \emph{strongly $\kappa$-strategically closed} if Player II has a winning strategy for $\mathcal{G}^\mathrm{I}_\kappa(\bbP)$.  For $\alpha = \omega+1$, Player II has a winning strategy in either game iff they have a strategy for the first $\omega$ moves that guarantees the existence of a lower bound to the play.  In this case, we say that $\bbP$ is \emph{$\sigma$-strategically closed}.

\begin{theorem}[Ishiu-Yoshinobu \cite{iy}]
\label{iythm}
A poset is $\sigma$-strategically closed iff it is strongly $\omega_1$-strategically closed.
\end{theorem}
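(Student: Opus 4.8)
The plan is to prove the two implications separately; one is immediate and the other carries all the work. For the easy direction, suppose $\bbP$ is strongly $\omega_1$-strategically closed and fix a winning strategy $\tau$ for Player II in $\mathcal{G}^{\mathrm{I}}_{\omega_1}(\bbP)$; consider its restriction to the first $\omega$ rounds. Along any play consistent with $\tau$ the game must survive past round $\omega$, since otherwise it ends after fewer than $\omega_1$ rounds and, by the winning convention, Player I wins, contradicting that $\tau$ is winning. As Player I moves first at the limit round $\omega$, survival forces a lower bound to the first $\omega$ moves, and since Player II's moves dominate Player I's within each round this is already a lower bound to Player II's $\omega$ moves. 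Thus $\tau\rest\omega$ is a strategy for the first $\omega$ moves guaranteeing a lower bound, which is exactly $\sigma$-strategic closure.

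For the converse, fix a $\sigma$-strategy $\sigma$: a Player II strategy for the first $\omega$ moves for which every consistent play has a lower bound. Two elementary facts are used repeatedly: a fresh copy of $\sigma$ can be started below any prescribed condition, and a lower bound to a $\sigma$-play lies below every Player II move of that play, so---since all moves in a run of $\mathcal{G}^{\mathrm{I}}$ form one descending sequence---a condition below cofinally many moves of a run is below all of them. I would first establish, by induction on countable limit ordinals $\lambda$, the auxiliary claim $(\star_\lambda)$: Player II has a strategy in $\mathcal{G}^{\mathrm{I}}_\lambda(\bbP)$ guaranteeing a lower bound to the whole play. The base case $\lambda=\omega$ is the hypothesis; for $\lambda=\mu+\omega$ ($\mu$ a limit) one plays $(\star_\mu)$ through round $\mu$ and then runs a fresh copy of $\sigma$ from round $\mu$, whose lower bound dominates the earlier moves automatically. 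The heart of the induction is $\lambda=\sup_n\lambda_n$ with $\lambda_n$ increasing limit ordinals: Player II plays each segment $[\lambda_n,\lambda_{n+1})$ by the $(\star)$-strategy for that strictly shorter length, while \emph{simultaneously} maintaining a \emph{master $\sigma$-thread}---an auxiliary $\sigma$-play of length $\omega$ whose $n$-th Player II move is exactly the condition she commits to at the checkpoint $\lambda_n$, the thread's $n$-th Player I move being Player I's big-game move at $\lambda_n$ (legal for the thread since it lies below everything played before $\lambda_n$). The $(\star)$-strategies supply the lower bounds needed at the limit rounds strictly inside the segments, and the lower bound $\sigma$ guarantees for the master thread is, by the descendingness fact, a lower bound to the entire length-$\lambda$ play.

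The naive concatenation of the segment strategies fails precisely because it yields only a descending $\omega$-sequence of checkpoint lower bounds with no reason to be bounded below; the master thread is the device that supplies that bound, and this is why it is forced on us. The genuine difficulty, and where I expect nearly all the work to lie, is upgrading ``Player II wins each $\mathcal{G}^{\mathrm{I}}_\lambda$'' to ``Player II wins $\mathcal{G}^{\mathrm{I}}_{\omega_1}$'': a single run of the length-$\omega_1$ game passes through \emph{all} countable limit rounds, and deeper limit rounds demand deeper nestings of the master-thread trick, so Player II cannot merely invoke one $(\star_\lambda)$. She must instead maintain, uniformly along the whole play and using only the countably much information available at any stage, a coherent \emph{tree} of $\sigma$-threads---the directive tree of Ishiu and Yoshinobu \cite{iy}---arranged so that every limit round $\lambda<\omega_1$ is cofinally approximated along some branch; organizing the branching and refinement so that every limit round is served in this way is the main obstacle. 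Granting the tree, the lower bound needed at a limit round is read off $\sigma$-closure along the relevant branch, and since every limit round below $\omega_1$ has cofinality $\omega$ the strategy never runs out of threads, so Player II wins $\mathcal{G}^{\mathrm{I}}_{\omega_1}(\bbP)$.
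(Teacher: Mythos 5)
Your proposal is correct and is essentially the paper's argument: the easy direction by restricting a winning strategy for $\mathcal{G}^{\mathrm{I}}_{\omega_1}(\bbP)$ to the first $\omega$ rounds, and the hard direction by threading $\sigma$-plays along the Ishiu--Yoshinobu structure on $\omega_1$ (the paper's lemma giving the order $\prec$ with finite, linearly ordered predecessor sets and cofinal $\omega$-chains at limits), reading off the lower bound needed at each limit round from the $\sigma$-guarantee along the relevant chain. Note that the paper likewise takes that combinatorial structure as a cited black box, so what you call the main obstacle is exactly the quoted lemma, and your master-thread bookkeeping corresponds to the paper's inductive definition of the strategy along $\prec$-chains.
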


Let us briefly outline the argument for the nontrivial direction, using:

\begin{lemma}[Ishiu-Yoshinobu \cite{iy}]
There exists a partial order $\prec$ on $\omega_1$ with the following properties:
\begin{enumerate}
    \item If $\alpha\prec\beta$, then $\alpha<\beta$.
    \item Each $\alpha<\omega_1$, the set of $\beta \prec \alpha$ is finite and linearly ordered by $\prec$.
    \item For each limit ordinal $\alpha<\omega_1$, there is a sequence $\beta_0 \prec \beta_1 \prec \beta_2 \prec \dots$ such that $\alpha = \sup_i \beta_i$.
\end{enumerate}
\end{lemma}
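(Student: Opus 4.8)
The plan is to reduce everything to the choice of an \emph{immediate-predecessor function}: for each $\gamma$ with $0<\gamma<\omega_1$ pick $p(\gamma)<\gamma$, declare $0$ the unique minimal point, and let $\prec$ be the transitive closure of $p$, i.e.\ $\beta\prec\gamma$ iff $\beta=p^{(k)}(\gamma)$ for some $k\ge 1$. Since $p(\gamma)<\gamma$, iterating $p$ from any $\gamma$ produces a strictly decreasing, hence finite, sequence of ordinals terminating at $0$; thus $\{\beta:\beta\prec\gamma\}=\{p(\gamma),p^{(2)}(\gamma),\dots,0\}$ is finite, and it is the $\prec$-chain $\cdots\prec p^{(2)}(\gamma)\prec p(\gamma)$, so it is linearly ordered by $\prec$. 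As $\beta\prec\gamma$ forces $\beta<\gamma$, the relation $\prec$ is automatically a (strict) partial order. Hence (1) and (2) hold no matter how $p$ is chosen, and all the content is in arranging (3): that every limit $\alpha<\omega_1$ carries a $\prec$-increasing sequence $\beta_0\prec\beta_1\prec\cdots$ with $\sup_i\beta_i=\alpha$.

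To get (3) I would choose $p$ so that a canonical fundamental sequence of each limit ordinal becomes a $\prec$-chain. Fix, once and for all, fundamental sequences $\alpha=\sup_n\alpha[n]$ for all countable limit $\alpha$, defined from the Cantor normal form in the standard way (if $\alpha=\eta+\omega^{\epsilon+1}$ then $\alpha[n]=\eta+\omega^{\epsilon}\cdot n$; if $\alpha=\eta+\omega^{\lambda}$ with $\lambda$ a limit then $\alpha[n]=\eta+\omega^{\lambda[n]}$, recursing on the exponent; and absorb coefficients $m>1$ by writing $\omega^{\delta}\cdot m=\omega^{\delta}(m-1)+\omega^{\delta}$). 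Now set $p(\gamma)=\gamma-1$ at successors and, for a limit $\gamma$, let $p(\gamma)$ be obtained by decreasing the trailing block of the Cantor normal form of $\gamma$ by one level (for $\gamma=\eta+\omega^{\epsilon+1}$ this is $p(\gamma)=\eta+\omega^{\epsilon}$; for $\gamma=\eta+\omega^{\lambda}$ with $\lambda$ limit, $p(\gamma)=\eta+\omega^{\lambda'}$ for a fixed $\lambda'\in\lambda$); the exact convention has to be fixed uniformly. With such a $p$, the claim is that $\langle\alpha[n]:n<\omega\rangle$ is $\prec$-increasing for every limit $\alpha$.

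The heart of the matter is that last claim: $\alpha[n]\prec\alpha[n+1]$, i.e.\ $\alpha[n]=p^{(j)}(\alpha[n+1])$ for some $j\ge 1$. One proves this by induction on $\alpha$, unwinding the recursive definition of the fundamental sequences: applying $p$ repeatedly to $\alpha[n+1]$ only ever modifies the trailing part of its normal form, strictly decreasing it while keeping it $\ge\alpha[n]$, so after finitely many steps it lands exactly on $\alpha[n]$. Since $\sup_n\alpha[n]=\alpha$, condition (3) follows and $\prec$ is as desired. I expect the only real obstacle to be the bookkeeping in this last step — pinning down the convention for $p$ on limits so that the descent argument goes through uniformly across all cases of the Cantor normal form (successor vs.\ limit exponent, coefficient one vs.\ larger). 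A more hands-on alternative is a transfinite recursion on $\gamma<\omega_1$ that, for each limit $\alpha$, commits to building an $\omega$-chain cofinal in $\alpha$ along an unbounded-in-$\alpha$ set of earlier stages; there the delicate point is instead \emph{coherence} — guaranteeing that the commitments made for different limit ordinals all fit inside a single partial order — and one is again led to organize the construction along the Cantor normal form.
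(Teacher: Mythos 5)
First, note that the paper does not prove this lemma at all: it is quoted from Ishiu--Yoshinobu \cite{iy}, so there is no internal argument to compare yours against; what follows assesses your proposal on its own. Your reduction is correct and economical: letting $\prec$ be the iteration relation of any predecessor function $p$ with $p(\gamma)<\gamma$ gives (1) and (2) for free (orbits are finite strictly decreasing chains), so all the content is in (3), and your verification in the successor-exponent case ($\alpha=\eta+\omega^{\epsilon+1}$) is indeed sound, since $p$ only shrinks the trailing term of the normal form and the trailing exponent strictly decreases to $0$, landing exactly on the prefix.

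The genuine gap is in the limit-exponent case, and more fundamentally in the scope of the construction. Fundamental sequences defined by recursion on Cantor normal form, as you describe, are only well defined below $\epsilon_0$: at an epsilon number $\alpha=\omega^\alpha$ the clause ``recurse on the exponent'' is circular, and there are uncountably many epsilon numbers below $\omega_1$, so neither your $\alpha[n]$ nor your $p$ is defined on most of $\omega_1$. Even below $\epsilon_0$, the step you defer as bookkeeping is the whole content of the lemma: for $\alpha=\eta+\omega^\lambda$ with $\lambda$ limit, the $p$-descent from $\alpha[n+1]=\eta+\omega^{\lambda[n+1]}$ hits $\alpha[n]=\eta+\omega^{\lambda[n]}$ only if the induced descent of exponents starting at $\lambda[n+1]$ passes \emph{exactly} through $\lambda[n]$; since that descent is strictly decreasing, one bad choice of the ``fixed $\lambda'\in\lambda$'' overshoots $\lambda[n]$ and can never recover. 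So the convention at limit exponents must cohere with the fundamental sequences of the exponents, which is the same property one level down --- a regress that is fine below $\epsilon_0$ with the standard Wainer choices but is precisely what has no canonical continuation up to $\omega_1$. Your fallback (arbitrary cofinal $\omega$-sequences plus transfinite recursion) names the right obstacle, coherence, but does not overcome it: a single ordinal $\gamma$ may be chosen as a term of the fundamental sequences of infinitely many limits, each demanding a different ordinal below $\gamma$ to lie on the $p$-orbit of $\gamma$, which is one finite chain; so the sequences cannot be chosen independently, and some device ensuring simultaneous coherence across all countable limits must be supplied. That device is exactly the substance of the Ishiu--Yoshinobu construction, and it is the missing piece here.
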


Suppose $\tau_0$ is a winning strategy for II in $\calG^{\mathrm{I}}_{\omega+1}(\bbP)$.  We convert this into a winning strategy $\tau_1$ for II in $\calG^{\mathrm{I}}_{\omega_1}(\bbP)$ inductively as follows.  
Suppose that we have defined $\tau_1$ for plays in which I has played ${<}\alpha$-many times.  Assume that for each $\beta<\alpha$, if 
$\la p_0,q_0,\dots,p_\beta,q_\beta \ra$ is a run of the game where II follows $\tau_1$, and $\beta_0 \prec \dots \prec \beta_{n-1} \prec \beta_n = \beta$ are the $\prec$-predecessors of $\beta$, then 
$\la p_{\beta_0},q_{\beta_0},p_{\beta_1},q_{\beta_1}\dots,p_{\beta_n},q_{\beta_n} \ra$ follows the strategy $\tau_0$.  
If $\alpha = \alpha' + 1$, then define 
$$\tau_1(\la p_0,q_0,\dots,p_{\alpha'},q_{\alpha'},p_\alpha\ra) = \tau_0(\la p_{\alpha_0},q_{\alpha_0},\dots,p_{\alpha_{n-1}},q_{\alpha_{n-1}},p_\alpha\ra),$$
where $\alpha_0 \prec \dots \prec \alpha_{n-1}$ are the $\prec$-predecessors of $\alpha$.  If $\alpha$ is a limit, then there is a sequence $\beta_0 \prec \beta_1 \prec \beta_2 \prec \dots$ such that $\alpha = \sup_i \beta_i$.  By induction, the subsequence of the play
$\la p_{\beta_0},q_{\beta_0},p_{\beta_1},q_{\beta_1},\dots \ra$ follows $\tau_0$.  Thus there is a lower bound $p_\alpha$ that Player I can play.  For any such play, define II's next move according to $\tau_1$ just as in the successor case.  This completes the construction of $\tau_1$.

To get the desired conclusion, we use the following generalization of McAloon's result:

\begin{lemma}
\label{strataloon}
    If $\bbP$ is strongly $\kappa$-strategically closed and nowhere $|\bbP|$-c.c., and $\bbP$ collapses $|\bbP|$ to $\kappa$, then $\bbP$ is forcing-equivalent to $\col(\kappa,|\bbP|)$.
\end{lemma}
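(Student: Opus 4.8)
The plan is to adapt McAloon's classical argument, where $\kappa$-closure is used only to build a decreasing tree of conditions that eventually matches the Levy collapse; the point is that strong $\kappa$-strategic closure suffices for the construction because we, as the builder of the tree, get to play the role of Player I while a winning strategy $\tau$ for Player II supplies lower bounds at limit stages. First I would fix a winning strategy $\tau$ for II in $\mathcal{G}^{\mathrm{I}}_\kappa(\bbP)$ and set $\mu = |\bbP|$. I would construct a labeled tree $T \subseteq {}^{<\kappa}\mu$ of conditions $\{p_s : s \in T\}$, decreasing along branches, with the following features: (i) along any branch, the sequence of conditions is a legal run of $\mathcal{G}^{\mathrm{I}}_\kappa(\bbP)$ in which II follows $\tau$ — so in particular every branch of length ${<}\kappa$ has a lower bound, namely II's $\tau$-response, which we then use as the next condition Player I plays; (ii) the immediate successors of any node $s$ form a maximal antichain below $p_s$; and (iii) each $p_s$ with $s$ of successor length decides "enough" of a fixed name $\dot f$ for a surjection from $\kappa$ onto $\mu$, so that the generic branch computes such a surjection. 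Using that $\bbP$ collapses $\mu$ to $\kappa$, such an $\dot f$ exists; using that $\bbP$ is nowhere $\mu$-c.c., I can always refine a maximal antichain below any condition to have size $\mu$ (this is exactly where the $\mu$-c.c.\ failure is needed, to guarantee the branching is "full width" $\mu$ and not smaller), so that the successor levels of $T$ can be arranged to biject with $\mu$.

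The key steps, in order: (1) record $\tau$ and $\dot f$; (2) build $T$ by recursion on levels $\alpha < \kappa$ — at successor levels, below each $p_s$ take a maximal antichain of size exactly $\mu$ refining a condition that decides $\dot f(\check\alpha)$, indexing it by $\mu$; at limit levels $\alpha$, for each branch $b$ through the tree so far, the sequence of conditions along $b$ is a partial run of $\mathcal{G}^{\mathrm{I}}_\alpha(\bbP)$ following $\tau$, so letting Player I move means there is something to continue with — but more precisely, since the run has II following $\tau$ and the game for II is the strong (Player-I-first) game, a lower bound exists and we let $p_b$ be any condition below II's required move, adding a node at level $\alpha$ for each such $b$; (3) check that $\{p_s : s \in T \text{ of successor length}\}$ is dense in $\bbP$ — this follows because the union of the antichains at each level is predense, so below any $q \in \bbP$ one can descend through compatible nodes and, by strategic closure along the way, reach arbitrarily deep successor nodes; (4) conclude that $\bbP$ is forcing-equivalent to the tree $T$ ordered by reverse extension, i.e.\ to ${}^{<\kappa}\mu$ ordered by reverse extension with each node $\mu$-splitting, which is precisely $\col(\kappa,\mu)$; (5) absorb the passage "forcing-equivalent to a dense subset" via the standard fact that a poset is forcing-equivalent to any of its dense subsets.

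I expect the main obstacle to be the bookkeeping at limit stages of the tree construction, specifically ensuring that the tree remains \emph{pruned} (every node has a successor) and that every branch of limit length $<\kappa$ is hit — i.e.\ the tree genuinely has height $\kappa$ on a dense set of branches. The subtlety is that a naive recursion could produce branches with no lower bound at some limit $\alpha$; strategic closure prevents this only if Player II has been following $\tau$ \emph{exactly} along that branch, which forces us to be careful that each $p_s$ at a successor level, while lying below $\tau$'s prescribed move, is still a \emph{legal} Player-I move, so that the run can be continued with II again responding by $\tau$. Concretely, the invariant to maintain is: for each $s \in T$, the sequence $\langle p_{s\rest 1}, \tau(\ldots), p_{s\rest 2}, \tau(\ldots), \ldots, p_s\rangle$ is a legal partial play of $\mathcal{G}^{\mathrm{I}}_{\mathrm{lh}(s)+1}(\bbP)$ with II following $\tau$; since $\bbP$ is separative (or we pass to its separative quotient) and $p_s$ is below II's last move, it is a legal next Player-I move, and the recursion goes through. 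The rest — that the resulting poset is literally $\col(\kappa,\mu)$ — is then the same normalization as in McAloon's theorem, using nowhere-$\mu$-c.c.\ to make all splittings have the uniform width $\mu$.
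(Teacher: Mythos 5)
Your construction of the tree itself (playing the role of Player I against a winning strategy for II in $\calG^{\mathrm{I}}_\kappa(\bbP)$, keeping the invariant that each branch is a legal run following the strategy, and using the nowhere-$|\bbP|$-c.c.\ hypothesis to get splitting of width $|\bbP|$) matches the paper's argument. The genuine gap is in your step (3), the density of $T$ in $\bbP$, which is where all the content of McAloon's theorem lies. You take $\dot f$ to be a name for a surjection from $\kappa$ onto $\mu=|\bbP|$ and argue that $T$ is dense because each level is a maximal antichain, so that below any $q$ one can ``descend through compatible nodes.'' That argument only produces, at each level, a node \emph{compatible} with $q$; it never produces a node of $T$ \emph{below} $q$, and maximal-antichain levels alone do not give density. (Consider $\bbP = \col(\kappa,\lambda)\times\bbQ$ with $\bbQ$ strategically closed and of size at most $\lambda$: a tree whose nodes only decide values of a collapsing function can have all levels maximal antichains while its generic branch fails to recover the $\bbQ$-coordinate, so such a tree is not dense in $\bbP$ and forcing with it is not equivalent to forcing with $\bbP$.) Hence your step (4), that $\bbP$ is forcing-equivalent to $T$, does not follow from what you have established, and your use of $\dot f$ (to make the tree generic compute a collapse) does no work toward density.

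The missing idea is how the hypothesis that $\bbP$ collapses $|\bbP|$ to $\kappa$ is actually used. The paper takes $\dot f$ to be a name for a surjection from $\kappa$ onto the \emph{generic filter} $\dot G$ (which exists precisely because $|\bbP|$ acquires size $\kappa$), and arranges that every node $t$ at level $\alpha$ decides $\dot f(\alpha)$ to be some condition $r$ with $t \leq r$ (since $t \Vdash r \in \dot G$, separativity gives $t \leq r$). Density is then immediate: given $p_0 \in \bbP$, some $p_1 \leq p_0$ forces $\dot f(\alpha)=\check p_0$ for some $\alpha$; the node $t \in T_\alpha$ compatible with $p_1$ decides $\dot f(\alpha)$ to be some $r \geq t$, and compatibility forces $r = p_0$, so $t \leq p_0$. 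If you replace your requirement (iii) by this one, your construction goes through essentially as in the paper. A minor further point: at limit levels the paper takes, for each branch, a maximal antichain of $|\bbP|$-many pairwise incompatible lower bounds rather than a single node per branch, so that the resulting tree is isomorphic to the set of conditions of $\col(\kappa,|\bbP|)$ with successor-ordinal domain, which is dense there; with one node per branch at limits you would also need to argue separately that limit levels remain predense, which your sketch does not do.
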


\begin{proof}
    Let $\lambda =|\bbP|$.
    Let $\dot f$ be a $\bbP$-name for a surjection from $\kappa$ to the generic filter $\dot G$ with $\dot f(0) = 1_\bbP$.  We build a tree $T \subseteq \bbP$ of height $\kappa$ with the following properties:
    \begin{enumerate}
        \item $1_\bbP$ is the root of $T$.
        \item For each $t \in T$, there is a set $S(t) \subseteq T$ of size $\lambda$ such that for each $s \in S(t)$ $s < t$, and there is no $x \in T$ such that $s<x<t$.
        \item  For each descending chain $\la t_i : i < \delta \ra$ in $T$, where $\delta<\kappa$, there is a set of $\lambda$-many pairwise incompatible lower bounds in $T$ to $\la t_i: i <\delta \ra$, all of which are maximal in $T$ among such lower bounds.
        \item $T$ is dense in $\bbP$.
    \end{enumerate}
    This suffices to build a dense embedding from $\{ q \in \col(\kappa,\lambda) : \dom q$ is a successor ordinal$\}$ to $\bbP$.  We build $T$ inductively by levels, $\la T_\alpha : \alpha<\kappa \ra$, where $T_\alpha$ is the set of all $t \in T$ such that the chain in $T$ above $t$ has length $\alpha$.

    Fix a winning strategy $\tau$ for II in $\calG^{\mathrm{I}}_\kappa(\bbP)$.  We inductively assume that:
    \begin{enumerate}
        \item Each $T_\alpha$ is a maximal antichain in $\bbP$.
        \item For each $t \in T_\alpha$, if $\la t_\beta : 0 < \beta \leq \alpha \ra$ enumerates the branch above $t = t_\alpha$, then there is a run of the game
        $\la s_0,t_0,s_1,t_1,\dots,s_\alpha,t_\alpha\ra$
        following $\tau$, with $s_0 = 1_\bbP$.  We assume that these  runs are chosen so that, for $\beta<\alpha$, $t_\beta \in T_\beta$ and $t_\alpha \in T_\alpha$ with $t_\beta > t_\alpha$, the chosen run of the game above $t_\beta$ is an initial segment of that above $t_\alpha$.
        \item For each $t \in T_\alpha$, there is $p \in \bbP$ such that $t \leq p$ and $t \Vdash \dot f(\alpha) = p$.
    \end{enumerate}
Given $t_\alpha \in T_\alpha$ and the associated run of the game $\la s_0,t_0,s_1,t_1,\dots,s_\alpha,t_\alpha = t \ra$ as above, there is a dense set of $p < t$ that are the last play by II in a run of the game following $\tau$ of the form $\la s_0,t_0,s_1,t_1,\dots,s_\alpha,t_\alpha,q,p\ra$, and which have the property that for some $r \in \bbP$, $p \leq r$ and $p \Vdash \dot f(\alpha+1) = r$.  We pick a maximal antichain $A_t$ of such $p<t$, with $|A_t| = \lambda$, and let $T_{\alpha+1} = \bigcup_{t \in T_\alpha} A_t$.  

Suppose we have constructed up to a limit $\alpha$. 
Let $\vec b = \la t_\beta : \beta < \alpha \ra$ be a branch through $\bigcup_{\beta<\alpha} T_\beta$, with $t_\beta \in T_\beta$.  There is a run of the game $\la s_0,t_0,\dots,s_\beta,t_\beta,\dots \ra$ of length $\alpha$, where II plays according to $\tau$.  Thus there are lower bounds to the sequence.  Among all such lower bounds, there is a dense set of $p$ that are the last play by II in a run of the game following $\tau$ of the form $\la s_0,t_0,\dots,s_\beta,t_\beta,\dots,q,p\ra$, extending the above sequence by two points, and which have the property that for some $r \in \bbP$, $p \leq r$ and $p \Vdash \dot f(\alpha) = r$.  Let $A_{\vec b}$ be a maximal antichain of such $p$ of size $|\lambda|$, and let $T_\alpha$ be the union of the $A_{\vec b}$, over all branches $\vec b$ through $\bigcup_{\beta<\alpha} T_\beta$.
 Since $\bbP$ is $\kappa$-distributive, the set of all $p \in \bbP$ that are below some $t \in T_\beta$ for each $\beta<\alpha$ is dense, and thus $T_\alpha$ is a maximal antichain in $\bbP$.

Let $T = \bigcup_{\alpha<\kappa} T_\alpha$.  It is clear that $T$ is $\kappa$-closed.  To show that it is dense in $\bbP$, let $p_0 \in \bbP$, and let $p_1 \leq p_0$ and $\alpha<\kappa$ be such that $p_1 \Vdash \dot f(\alpha) = p_0$.  There is some $t \in T_\alpha$ that is compatible with $p_1$, and there is some $r \in \bbP$ such that $t \leq r$ and $t \Vdash \dot f(\alpha) = r$.  Thus $t \leq r = p_0$.
\end{proof}

\section{Comparing $\bbA(\omega_1,\kappa)$ and $\shi(\omega_1,\kappa)$}

Let us first recall the definitions of the forcings $\bbA(\mu,\kappa)$ and $\shi(\mu,\kappa)$.  Suppose $\kappa$ is inaccessible and $\mu<\kappa$ is regular and uncountable.

Let $\dot X$ be the canonical name for the Cohen subset of $\kappa$ added by the second stage of $\col(\mu,{<}\kappa)*\dot\add(\kappa)$.  Letting $\bbB$ be the Boolean completion of this forcing, we define $\bbA(\mu,\kappa)$ as the complete subalgebra of $\bbB$ generated by this name, i.e.\ the smallest complete subalgebra containing all Boolean values $|| \check \alpha \in \dot X ||$ for $\alpha<\kappa$.  If $G * X \subseteq \col(\mu,{<}\kappa)*\dot\add(\kappa)$ is generic over $V$, and $H$ is the induced filter on $\bbA(\mu,\kappa)$, then $V[H] = V[X]$ (see \cite[Lemma 15.40]{Jech}).  It is easy to see that all bounded subsets of $\kappa$ added by $G$ are coded in intervals of $X$, and moreover $V[H]$ and $V[G]$ have the same ${<}\kappa$-sequences of ordinals.

Let us recall a few preliminary notions before introducing the definition of $\shi(\mu,\kappa)$.  We call a poset \emph{completely $\mu$-closed} if every descending sequence of length ${<}\mu$ has a greatest lower bound.  A subset of a completely $\mu$-closed poset is called \emph{$\mu$-closed} when it is closed under descending ${<}\mu$-sequences.  A map from one $\mu$-closed poset to another is called \emph{$\mu$-continuous} when it preserves limits of descending sequences of length ${<}\mu$.  A crucial but easy fact is:
\begin{lemma}
    The intersection of ${<}\kappa$-many dense $\kappa$-closed subsets of a completely $\kappa$-closed poset is dense and $\kappa$-closed.
\end{lemma}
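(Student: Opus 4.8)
The plan is to prove the density and closure claims separately, with density being the point that needs an argument. Let $P$ be a completely $\kappa$-closed poset, and let $\langle D_i : i < \delta \rangle$ be dense $\kappa$-closed subsets, with $\delta < \kappa$. First I would observe that closure is immediate: if $\langle p_\xi : \xi < \eta \rangle$ is a descending sequence of length $\eta < \kappa$ with every $p_\xi \in \bigcap_{i<\delta} D_i$, then for each fixed $i$ the sequence lies in $D_i$, which is $\kappa$-closed, so the greatest lower bound $p$ (taken in $P$, which exists by complete $\kappa$-closure) lies in $D_i$; hence $p \in \bigcap_{i<\delta} D_i$. The only content is therefore that $\bigcap_{i<\delta} D_i$ is dense.

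For density, fix $p \in P$; I would build a descending sequence $\langle p_i : i \le \delta \rangle$ in $P$ by recursion so that $p_{i+1} \in D_i$ and $p_0 \le p$. At successor steps, given $p_i$, use density of $D_i$ to pick $p_{i+1} \le p_i$ with $p_{i+1} \in D_i$. At limit stages $\lambda \le \delta$, use complete $\kappa$-closure of $P$ to let $p_\lambda$ be the greatest lower bound of $\langle p_i : i < \lambda \rangle$ (note $\lambda < \kappa$ since $\delta < \kappa$). Then $p_\delta$ is a lower bound of all the $p_{i+1}$'s. The remaining step is to check $p_\delta \in \bigcap_{i<\delta} D_i$: for each $i < \delta$, the tail $\langle p_{j+1} : i \le j < \delta \rangle$ together with the limits at stages above $i$ forms a descending sequence of length $< \kappa$ lying entirely in $D_i$ — here one uses that $D_i$ is $\kappa$-closed to absorb the limit stages $p_\lambda$ for $i < \lambda \le \delta$ (each such $p_\lambda$ is a limit of a descending sequence of earlier terms, cofinally many of which lie in $D_i$, so $p_\lambda \in D_i$ by $\kappa$-closure) — and $D_i$ being $\kappa$-closed contains the greatest lower bound of this sequence, which is $p_\delta$. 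Since $p_\delta \le p$, density follows.

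The one subtlety to handle carefully — the main "obstacle," though it is minor — is the bookkeeping at limit stages: one must confirm by a sub-induction that each $p_\lambda$ (for limit $\lambda$) lies in $D_i$ for all $i < \lambda$, so that when forming $p_\delta$ the whole tail above any fixed index $i$ sits inside $D_i$ and the $\kappa$-closure of $D_i$ applies. This is where the hypothesis that each $D_i$ be $\kappa$-closed (not merely dense) is used, and where complete $\kappa$-closure of the ambient poset (rather than mere $\kappa$-distributivity) is needed so that the greatest lower bounds actually exist as elements of $P$. Everything else is routine transfinite recursion of length $\delta + 1 < \kappa$.
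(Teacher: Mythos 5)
The closure half of your argument is fine, but the density half has a genuine gap. Your recursion enters each $D_i$ exactly once, at the step from $p_i$ to $p_{i+1}$, and you then assert that for each $i<\delta$ the tail $\langle p_{j+1} : i\le j<\delta\rangle$ (plus the later limit terms) lies entirely in $D_i$. That is false: $p_{j+1}$ was chosen in $D_j$, and for $j>i$ nothing places it in $D_i$. The parenthetical sub-induction does not repair this: at the first limit $\lambda>i$, the only term below $\lambda$ guaranteed to lie in $D_i$ is $p_{i+1}$, so ``cofinally many of which lie in $D_i$'' is unjustified, and the $\kappa$-closure of $D_i$ (which applies only to descending sequences \emph{contained in} $D_i$) gives no information about $p_\lambda$, nor about $p_\delta$. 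The problem already appears for $\delta=2$, where your construction outputs some $p_2\in D_1$ below some $p_1\in D_0$ and stops: take $P=2^{<\omega_1}$ ordered by end-extension (so it is completely $\omega_1$-closed), let $D_0$ consist of the conditions of successor length whose last entry is $0$ together with the conditions of limit length having cofinally many entries equal to $0$, and let $D_1$ be defined with $1$ in place of $0$; these are dense and $\omega_1$-closed, yet $p_1=\langle 0\rangle$, $p_2=\langle 0,1\rangle$ is a legal run of your construction and $p_2\notin D_0$.

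The missing idea is that each $D_i$ must be met \emph{cofinally often} along the final descending sequence, so that the greatest lower bound of the whole sequence is also the greatest lower bound of a subsequence lying inside $D_i$, to which the $\kappa$-closure of $D_i$ genuinely applies. For instance, given $p$, build a descending sequence of length $\delta\cdot\omega$ (an ordinal of cardinality $\max(|\delta|,\aleph_0)<\kappa$), entering $D_i$ at every stage of the form $\delta\cdot n+i$ and taking greatest lower bounds at limits using complete $\kappa$-closure of $P$. For each $i$ the terms lying in $D_i$ are then cofinal, so they have the same lower bounds as the whole sequence; hence the glb $q$ of the whole sequence is the glb of a descending sequence inside $D_i$ and lies in $D_i$, giving $q\le p$ in $\bigcap_{i<\delta}D_i$. (Equivalently, one can induct on $\delta$: your limit-stage bookkeeping works there because the partial intersections $\bigcap_{i<\gamma}D_i$ are decreasing in $\gamma$, but the successor step is exactly the two-set case and already requires this alternating $\omega$-chain trick.) A single pass of length $\delta+1$, as in your proposal, cannot be repaired without such a change.
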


\begin{definition}
$\shi(\mu,\kappa)$ consists of all triples of the form $\langle \mathbb{P}, p, \tau\rangle$ where:
\begin{enumerate}
    \item $\mathbb{P}\in V_\kappa$ is a separative completely $\mu$-closed poset.
    \item $p\in \mathbb{P}$.
    \item For some $\gamma<\kappa$, $\tau$ is a $\mathbb{P}$-name for a function from $\gamma$ to $2$.
\end{enumerate}
We put $\langle \mathbb{Q}, q,\sigma\rangle \leq \langle \mathbb{P}, p,\tau\rangle$ when there is a $\mu$-continuous projection $\pi$ from a $\mu$-closed dense subset of $\mathbb{Q}$ to $\mathbb{P}$ such that $q \Vdash p \in \pi(\dot{G})$, where $\dot{G}$ is the canonical name for the $\bbQ$-generic filter, and $q \Vdash \pi^*(\tau) \trianglelefteq \sigma$, where $\pi^*$ is the canonical translation of $\bbP$-names into $\bbQ$-names via the projection $\pi$.
\end{definition}

The key feature of both $\bbA(\mu,\kappa)$ and $\shi(\mu,\kappa)$ is that they have a kind of universal property with respect to a large class of ``ordinary'' collapsing posets.

\begin{definition}
A poset $\bbR$ is called a \emph{reasonable $(\mu,\kappa)$-collapse} when:
\begin{enumerate}
    \item $\bbR$ is $\kappa$-c.c.\ and completely $\mu$-closed.
    \item There is a $\subseteq$-increasing sequence of regular suborders $\la \bbR_\alpha : \alpha<\kappa \ra$, with $\bbR = \bigcup_\alpha \bbR_\alpha$.
    \item There is a sequence of maps $\la \pi_\alpha : \alpha < \kappa \ra$ such that for all $\alpha$, $\pi_\alpha : \bbR \to \bbR_\alpha$ is a $\mu$-continuous projection, $\pi_\alpha \restriction \bbR_\alpha = \mathrm{id}$, and for $\alpha<\beta$, $\pi_\alpha = \pi_\alpha \circ \pi_\beta$.
    \item For unboundedly many regular $\alpha<\kappa$, $|\bbR_{\alpha}| = \alpha$ and $\Vdash_{\bbR_{\alpha}} |\alpha| = \mu$.
\end{enumerate}
\end{definition}

\begin{lemma}
\label{absorption}
    Suppose $\bbR$ is a $(\mu,\kappa)$-reasonable collapse.  
    \begin{enumerate}
    \item Let $\bbA(\bbR)$ be the complete subalgebra of $\calB(\bbR * \dot\add(\kappa))$ generated by the canonical name for the generic subset of $\kappa$ added by the second step.  Then $\bbA(\bbR) \cong \bbA(\mu,\kappa)$.
    \item There is a dense subset $D \subseteq \bbR * \dot\add(\kappa)$ and a projection $\psi : D \to \shi(\mu,\kappa)$.
    \item There is an $\bbA(\bbR)$-name $\dot X_\bbA$ and a $\shi(\mu,\kappa)$ name $\dot X_\shi$, both for subsets of $\kappa$, such that whenever $G * X \subseteq \bbR * \dot\add(\kappa)$ is generic, and $H$ and $K$ are the induced filters on $\bbA(\bbR)$ and $\shi(\mu,\kappa)$ respectively, then $X = \dot X_\bbA^{H} = \dot X_\shi^{K}$.
    \end{enumerate}
\end{lemma}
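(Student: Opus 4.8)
The plan is to extract from the data of a reasonable collapse a coherent, continuous direct‑limit structure and to run all three parts off it. Two consequences of clauses (2)--(3) are used throughout. First, each $\bbR_\alpha$ is itself completely $\mu$-closed: given a descending $\langle p_i : i<\delta\rangle$ ($\delta<\mu$) in $\bbR_\alpha$ with greatest lower bound $p$ in $\bbR$ (which exists, as $\bbR$ is completely $\mu$-closed), $\mu$-continuity of $\pi_\alpha$ together with $\pi_\alpha\restriction\bbR_\alpha=\mathrm{id}$ shows $\pi_\alpha(p)$ is the greatest lower bound of $\langle p_i : i<\delta\rangle$ in $\bbR_\alpha$. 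Second, for $\alpha\le\beta$ the restriction $\pi_\alpha\restriction\bbR_\beta$ is a $\mu$-continuous projection of $\bbR_\beta$ onto $\bbR_\alpha$, since $\pi_\alpha=\pi_\alpha\circ\pi_\beta$. Let $C$ be the set of $\alpha<\kappa$ furnished by clause (4); for $\alpha\in C$, $\bbR_\alpha$ is completely $\mu$-closed (hence strongly $\mu$-strategically closed), of size $\alpha$, nowhere $\alpha$-c.c.\ (it collapses $\alpha$ to $\mu<\alpha$ below every condition), and collapses $\alpha$ to $\mu$, so Lemma~\ref{strataloon}, with $\mu$ in the role of $\kappa$, yields a dense embedding $\col(\mu,\alpha)\hookrightarrow\bbR_\alpha$. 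Finally, fix the coding convention behind the observation, made earlier, that the bounded subsets of $\kappa$ added by an $\bbR$-generic $G$ are coded in intervals of the generic subset $X$ of $\kappa$ added over $V[G]$ by $\dot\add(\kappa)$; we may arrange that for each $\gamma\in C$ along which this coding is complete, $X\restriction\gamma$ is named by a canonical $(\bbR_\gamma*\dot\add(\gamma))$-name $\tau_\gamma$ for a total function $\gamma\to2$, with these names coherent in $\gamma$ (using $\pi_\gamma=\pi_\gamma\circ\pi_{\gamma'}$).

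For part (1), first note $\col(\mu,{<}\kappa)$ is itself a reasonable $(\mu,\kappa)$-collapse: its suborders $\col(\mu,{<}\alpha)$ are regular with commuting restriction projections, and inaccessibility of $\kappa$ supplies the set of clause (4). Hence $\bbA(\col(\mu,{<}\kappa))$ is by definition $\bbA(\mu,\kappa)$, and it suffices to prove $\bbA(\bbR)\cong\bbA(\bbS)$ for arbitrary reasonable collapses $\bbR,\bbS$. I would do this by establishing a characterization, independent of the particular collapse, of when a set $X\subseteq\kappa$ in a forcing extension of $V$ is $\bbA(\bbR)$-generic over $V$: unwinding the definition of $\bbA(\bbR)$ as the complete subalgebra generated by the name for $X$ (with \cite[Lemma 15.40]{Jech}), $X$ is $\bbA(\bbR)$-generic iff the data $X$ decodes on the fixed family of intervals is a system of surjections onto the ordinals below $\kappa$ that is $\col(\mu,{<}\kappa)$-generic over $V$ in the appropriate sense, while $X$ is otherwise ``Cohen over'' that system. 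Here the dense embeddings $\col(\mu,\alpha)\hookrightarrow\bbR_\alpha$ are what let one phrase the genericity of the decoded system without reference to $\bbR$, and the coherence of the $\pi_\alpha$ is what makes the interval-by-interval decoding coherent; this is essentially the analysis carried out for $\col(\mu,{<}\kappa)$ in \cite{eskewdense}, applied here to an arbitrary reasonable $\bbR$. Granting such a characterization with a fixed property $\Phi$, the class of $\bbA(\bbR)$-generic subsets of $\kappa$ — over all forcing extensions of $V$ — coincides with that of $\bbA(\bbS)$; since each of $\bbA(\bbR),\bbA(\bbS)$ is generated as a complete Boolean algebra by its name for $X$, the map carrying the value computed in $\bbA(\bbR)$ of any Boolean statement about that name to the corresponding value in $\bbA(\bbS)$ is a well-defined isomorphism.

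For part (2), let $D$ consist of the $(r,\dot s)\in\bbR*\dot\add(\kappa)$ such that, for some $\gamma\in C$, $r\in\bbR_\gamma$ and $r$ decides $\dot s$ to be an actual condition $s$ with $\dom(s)\subseteq\gamma$; since $\dom(s)$ is bounded below $\kappa$ by regularity of $\kappa$, clause (2) makes $D$ dense. Put $\psi(r,\dot s)=\langle\,\bbR_\gamma*\dot\add(\gamma),\ (r,s),\ \tau_\gamma\,\rangle$, which (after identifying $\bbR_\gamma$ with an isomorphic copy on an ordinal ${<}\kappa$, so that $\bbR_\gamma*\dot\add(\gamma)\in V_\kappa$, and passing to separative quotients as needed) is a legitimate element of $\shi(\mu,\kappa)$: $\bbR_\gamma*\dot\add(\gamma)$ is completely $\mu$-closed by the first paragraph, and $\tau_\gamma$ names a function $\gamma\to2$. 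For order preservation, suppose $(r',\dot s')\le(r,\dot s)$ in $D$ with witnesses $\gamma'\ge\gamma$: the map combining $\pi_\gamma\restriction\bbR_{\gamma'}$ with restriction on the Cohen coordinate is a $\mu$-continuous projection of $\bbR_{\gamma'}*\dot\add(\gamma')$ onto $\bbR_\gamma*\dot\add(\gamma)$, it forces $(r,s)$ into the generic filter, and $\pi^*(\tau_\gamma)\trianglelefteq\tau_{\gamma'}$ because restricting the longer interval-decoding to $\gamma$ returns the shorter one — which is exactly what $\pi_\gamma=\pi_\gamma\circ\pi_{\gamma'}$ guarantees, the collapsing data below $\gamma$ lying in $V[G\cap\bbR_\gamma]$. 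The projection (lifting) property follows by unwinding $\le$ in $\shi(\mu,\kappa)$: a condition $\langle\bbP,p,\sigma\rangle\le\psi(r,\dot s)$ comes with a $\mu$-continuous projection from a $\mu$-closed dense subset of $\bbP$ onto $\bbR_\gamma*\dot\add(\gamma)$, which we reflect into $\bbR*\dot\add(\kappa)$ using the $\pi_\alpha$ to obtain a condition of $D$ below $(r,\dot s)$ that $\psi$ maps below $\langle\bbP,p,\sigma\rangle$; finally one normalizes $\psi$ so that $\psi$ of the trivial condition is the trivial triple, which makes its range dense.

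For part (3), let $\dot X_\bbA$ be the canonical name for the subset of $\kappa$ generating $\bbA(\bbR)\cong\bbA(\mu,\kappa)$, and $\dot X_\shi$ the $\shi(\mu,\kappa)$-name whose value under a generic $K$ is $\bigcup\{\,\tau^{G_\bbP}:\langle\bbP,p,\tau\rangle\in K\,\}$, read via the generic filters on the small posets appearing in $K$. Given generic $G*X\subseteq\bbR*\dot\add(\kappa)$ with induced $H$ on $\bbA(\bbR)$ and $K$ on $\shi(\mu,\kappa)$, $\dot X_\bbA^H=X$ is immediate from the definition of $\bbA(\bbR)$; and for each $\gamma\in C$ there is $(r,\dot s)\in G*X$ belonging to $D$ with witness $\gamma$, so $\langle\bbR_\gamma*\dot\add(\gamma),(r,s),\tau_\gamma\rangle\in K$ and $\tau_\gamma$ evaluates, under the generic induced on $\bbR_\gamma*\dot\add(\gamma)$, to $X\restriction\gamma$ by the chosen coding convention; taking the union over $\gamma\in C$ gives $\dot X_\shi^K=X$. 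The step I expect to be the main obstacle is the collapse-independent characterization of $\bbA(\bbR)$-genericity in part (1): isolating the right property $\Phi$ and verifying that $\bbA(\bbR)$-generic sets satisfy precisely $\Phi$ for \emph{every} reasonable collapse $\bbR$ is what forces the joint use of the normalization $\bbR_\alpha\approx\col(\mu,\alpha)$ from Lemma~\ref{strataloon} and of the coherence of the projections $\pi_\alpha$, rather than ad hoc level-by-level matchings that need not assemble into a single isomorphism.
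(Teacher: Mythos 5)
There is a genuine gap, and it sits exactly where the real work of this lemma lies. For part (1) you reduce the problem to a ``collapse-independent characterization $\Phi$ of $\bbA(\bbR)$-genericity'' of a subset of $\kappa$, and then you explicitly defer producing $\Phi$, calling it the main obstacle. But that characterization, together with the verification that every reasonable $\bbR$ (not just $\col(\mu,{<}\kappa)$) realizes it, \emph{is} the content of (1); what remains in your sketch is only the soft closing step (that identical classes of generic objects yield an isomorphism matching Boolean values of infinitary formulas in the generators $\lVert\check\alpha\in\dot X\rVert$), which itself is asserted rather than argued. The paper does not reprove this either: it cites \cite[Theorem 2.12]{eskewdense} for (1) and \cite[Lemmas 25 and 28]{eh} for (2) and (3), so the honest comparison point is the explicit data the paper does give, namely its definition of $\psi$ and of $\dot X_\shi$.

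There your proposal diverges, and the divergence makes the unproved steps harder. The paper's $\psi$ sends $\la r,\tau\ra$ with $r\in\bbR_\alpha$ and $\tau$ forced to have domain $\alpha$ to $\la\bbR_\alpha,r,\tau\ra$: the poset is just $\bbR_\alpha$ and the third coordinate is the name for the Cohen \emph{condition} itself. You instead use $\bbR_\gamma*\dot\add(\gamma)$ with $\tau_\gamma$ naming the \emph{generic} function of the second step. The lifting property of $\psi$ --- the substance of \cite[Lemma 25]{eh} --- is precisely where your argument is one sentence (``reflect into $\bbR*\dot\add(\kappa)$ using the $\pi_\alpha$''): given $\la\bbP,p,\sigma\ra\le\psi(d)$ with $\bbP$ an arbitrary small separative completely $\mu$-closed poset, you must produce $d'\le d$ in $D$ and a $\mu$-continuous projection from a $\mu$-closed dense subset of the poset of $\psi(d')$ onto $\bbP$ with $p$ forced into the projected generic and with $\pi^*(\sigma)$ an initial segment of the new third coordinate. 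This requires an absorption argument (clause (4), complete $\mu$-closure, and a McAloon-type equivalence $\bbR_{\gamma'}\sim\col(\mu,\gamma')$), and with your choice of third coordinate the requirement $\pi^*(\sigma)\trianglelefteq\tau_{\gamma'}$ additionally ties the Cohen coordinates below the length of $\sigma$ to the projected $\bbP$-generic, a constraint you never address; with the paper's normalization one can simply take the new third coordinate to be $\pi^*(\sigma)$ padded to length $\gamma'$, which is why that form is used. Finally, your $\dot X_\shi$, described as $\bigcup\{\tau^{G_\bbP}\}$ ``read via the generic filters on the small posets appearing in $K$,'' is not a well-defined name: for non-coding conditions there is in general no canonical induced filter on $\bbP$ (the paper warns of exactly this after Lemma~\ref{freeze}), which is why it uses the syntactic name $\{\alpha<\kappa : (\exists\la\bbP,p,\tau\ra\in\dot G)\ p\Vdash_\bbP\tau(\alpha)=1\}$; your verification of (3) should be rerun against that definition.
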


For a proof of (1), see \cite[Theorem 2.12]{eskewdense}; for (2), see \cite[Lemma 25]{eh}, and for (3), see \cite[Lemma 28]{eh}.  We note that $\dot X_\shi$ is defined as $\{ \alpha < \kappa : (\exists \la \bbP,p,\tau \ra \in \dot G) p \Vdash_\bbP \tau(\alpha) = 1 \}$, where $\dot G$ names the $\shi(\mu,\kappa)$-generic filter.  (3) implies that there is a complete Boolean algebra $\bbS \cong \calB(\shi(\mu,\kappa))$ such that $\bbA(\bbR) \subseteq \bbS \subseteq \calB(\bbR*\dot\add(\kappa))$.

The map $\psi$ above is defined as follows.  Fixing some sequence $\la\bbR_\alpha : \alpha < \kappa \ra$ witnessing that $\bbR$ is $(\mu,\kappa)$-reasonable, we let $D$ be the dense set of conditions $\la r,\tau \ra$ such that for some $\alpha$, $\tau$ is forced to have domain $\alpha$ and $r \in \bbR_\alpha$.  For such $\la r,\tau \ra$, we define $\psi(r,\tau) = \la \bbR_\alpha,r,\tau\ra$.  For such conditions, we have that $r \Vdash \alpha \in \dot X_{\add(\kappa)}$ iff $r \Vdash \tau(\alpha) = 1$ iff $\psi(r,\tau) \Vdash \alpha \in \dot X_\shi$.

An important notion in the analysis of $\shi(\mu,\kappa)$ is that of a \emph{coding condition}.

\begin{definition}
    We say $\la \bbP,p,\tau \ra \in \shi(\mu,\kappa)$ is a \emph{coding condition} when $\tau$ is forced to have domain $\delta<\kappa$, and whenever $G \subseteq \bbP$ is generic,
    \begin{itemize}
        \item $V[G] = V[\tau^G]$;
        \item in all generic extensions of $V[G]$, $G$ is the unique filter $F$ that is $\bbP$-generic over $V$ such that $\tau^F = \tau^G$.
    \end{itemize}
  We say that $\la \bbP,p,\tau \ra$ is a \emph{strong coding condition} when $\tau$ is forced to have domain $\delta<\kappa$, and there is a 
  dense $D \subseteq \bbP$, a set $X \subseteq \delta$, and a function $f : X \to D$ such that for all $\alpha \in X$,
    $$1_\bbP \Vdash f(\alpha) \in \dot G \leftrightarrow \tau(\alpha) = 1.$$
\end{definition}

It is easy to show that strong coding conditions are coding, and that they are dense in $\shi(\mu,\kappa)$.  Any condition can be extended to a coding condition by only lengthening the $\tau$-part.
The main fact about coding conditions is the ``Projection Freezing Lemma'':

\begin{lemma}
\label{freeze}
    Let $\langle \mathbb{P}, p, \tau\rangle$ be a coding condition and let $\langle \mathbb{Q},q,\sigma\rangle \leq \langle \mathbb{P}, p, \tau\rangle$.   For any dense $D \subseteq \bbQ$, if $\pi_0,\pi_1$ are projections from $D$ to $\bbP$ such that $q \Vdash \pi_i^*(\tau) \trianglelefteq \sigma$ for $i=0,1$, then $\pi_0(r) = \pi_1(r)$ for all $r \leq q$ in $D$.
\end{lemma}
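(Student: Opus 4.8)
The plan is to exploit the defining property of a coding condition: the generic $G \subseteq \bbQ$ below $q$ determines, via either projection $\pi_i$, a filter on $\bbP$, and we must show these two filters coincide. First I would pass to the generic extension $V[G]$ where $G \subseteq \bbQ$ is generic with $q \in G$. For $i = 0, 1$, let $G_i = \{ u \in \bbP : \pi_i(r) \le u \text{ for some } r \in G \cap D \}$; since $\pi_i$ is a projection from the dense set $D$ to $\bbP$, each $G_i$ is a $\bbP$-generic filter over $V$, and it lies in $V[G]$. The hypothesis $q \Vdash \pi_i^*(\tau) \trianglelefteq \sigma$ is exactly the statement that $\tau^{G_i}$ agrees with $\sigma^G$ on the domain of $\tau$; since both $G_0$ and $G_1$ induce the same value $\tau^{G_0} = \sigma^G \restriction \delta = \tau^{G_1}$ (where $\delta = \dom \tau$), the uniqueness clause in the definition of a coding condition — applied inside the generic extension $V[G]$ of $V[G_0]$ (note $V[G_0] \subseteq V[G]$) — forces $G_0 = G_1$.

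Having established $G_0 = G_1$ in every generic extension by $\bbQ$ below $q$, I would then descend back to the ground model. Fix $r \le q$ with $r \in D$, and suppose toward a contradiction that $\pi_0(r) \ne \pi_1(r)$. By separativity of $\bbP$, some $v \in \bbP$ witnesses this: without loss of generality $v \le \pi_0(r)$ and $v \perp \pi_1(r)$, or $v \le \pi_0(r)$ and $v \not\le \pi_1(r)$. Since $\pi_0$ is a projection, there is $r' \le r$ in $D$ with $\pi_0(r') \le v$. Now take a generic $G \subseteq \bbQ$ with $r' \in G$. Then $\pi_0(r') \in G_0$, so $v \in G_0 = G_1$; but also $r \in G$ gives $\pi_1(r) \in G_1$, and $v$ is incompatible with (or at least not above, hence by separativity incompatible with something below) $\pi_1(r)$ — contradicting that $G_1$ is a filter. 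Hence $\pi_0(r) = \pi_1(r)$ for all $r \le q$ in $D$.

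The main obstacle I anticipate is handling the bookkeeping around the uniqueness clause of the coding condition carefully: the definition says $G$ (the $\bbP$-generic) is unique "in all generic extensions of $V[G]$" relative to the value $\tau^G$, so I need to make sure the filter $G$ of the Lemma — which I have renamed to avoid a clash; call it $H \subseteq \bbQ$ — genuinely sits as (or inside) a generic extension of $V[G_0]$, and that $\tau^{G_0}$ really equals $\tau^{G_1}$ and not merely that both are initial segments of $\sigma^H$ of possibly different lengths. Since $\tau$ is forced to have the fixed domain $\delta$, the relation $\pi_i^*(\tau) \trianglelefteq \sigma$ pins down $\tau^{G_i} = \sigma^H \restriction \delta$ on the nose for both $i$, so this is fine, but it deserves an explicit sentence. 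A secondary, purely technical point is verifying that $\pi_i$ being defined only on the dense set $D$ (rather than all of $\bbQ$) does not obstruct the genericity of $G_i$: this is standard, since $D$ is dense and $\pi_i$ is a projection on $D$, so $\pi_i[D \cap H]$ generates a filter meeting every dense subset of $\bbP$.
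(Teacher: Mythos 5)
Your proof is correct, and since the paper states Lemma~\ref{freeze} without proof (it is imported from \cite{eh}), there is nothing to diverge from: your argument is exactly the intended one --- pass to a $\bbQ$-generic $H\ni q$, let $G_0,G_1$ be the $\bbP$-generics generated by $\pi_0[H\cap D],\pi_1[H\cap D]$, use that $\dom\tau$ is forced to be exactly $\delta$ to get $\tau^{G_0}=\sigma^H\rest\delta=\tau^{G_1}$, invoke the uniqueness clause of the coding condition in $V[H]$ (which is a generic extension of $V[G_0]$ via the quotient forcing attached to the projection) to get $G_0=G_1$, and then use separativity of $\bbP$ together with the projection property to pull $G_0=G_1$ back to $\pi_0(r)=\pi_1(r)$ for $r\leq q$ in $D$. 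The only cosmetic point is the slightly garbled case distinction in the descent step: separativity directly gives, from $\pi_0(r)\not\leq\pi_1(r)$, some $v\leq\pi_0(r)$ incompatible with $\pi_1(r)$, and the symmetric case plus antisymmetry yields equality, just as you conclude.
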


Applying this lemma, we get that whenever $G \subseteq \shi(\mu,\kappa)$ is generic, and $\la\bbP,p,\tau\ra \in G$ is a coding condition, then we can define a local filter $G_\bbP^\tau$ as $\{ q \in \bbP : \la\bbP,q,\tau\ra \in G\}$, which will be $\bbP$-generic over $V$.  (For non-coding conditions, this definition may not yield a filter.)

Let us begin the argument for Theorem~\ref{specialmu}. Per the discussion after Lemma~\ref{absorption}, Let $\bbS \subseteq \bbB = \calB(\col(\mu,{<}\kappa)*\dot\add(\kappa))$ be the complete subalgebra corresponding to $\shi(\mu,\kappa)$, and let $\pi_\bbA,\pi_\bbS$ be the canonical projections from $\bbB$ to $\bbA(\mu,\kappa),\bbS$ respectively, so that $\pi_\bbA = \pi_\bbA \circ \pi_\bbS$.
Let $H \subseteq \bbA(\mu,\kappa)$ be a generic filter over $V$, and let $X = \dot X_\bbA^H \subseteq\kappa$ be the corresponding set of ordinals that generates $H$.  Let $\shi(\mu,\kappa)/H$ be the quotient forcing via $\pi_\bbA \restriction \bbS$.
\begin{claim}
\label{equivcond}

There is a dense set of $\la\bbP,p,\tau\ra\in\shi(\mu,\kappa)/H$ for which there exists $\alpha<\kappa$ with the following properties:
\begin{enumerate}
    \item $\alpha$ is regular in $V$ and of cardinality $\mu$ in $V[X\cap\alpha]$.
    \item $V[X]$ is a $\kappa$-strategically closed forcing extension of $V[X \cap \alpha]$.
    \item\label{shiquotient}$\la\bbP,p,\tau\ra$ is a coding condition such that $\Vdash_\bbP \dom \tau = \alpha$.
\end{enumerate}
Furthermore, every two conditions $\la\bbP,p,\tau\ra,\la\bbQ,q,\sigma\ra$ as in (\ref{shiquotient}) are equivalent in the separative quotient of $\shi(\mu,\kappa)/H$.
\end{claim}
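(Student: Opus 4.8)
\emph{Proof sketch.} The plan is to produce the dense set concretely from the reasonable collapse underlying $\bbA(\mu,\kappa)$, and then to reduce the equivalence of two such conditions to the uniqueness of local generics that coding conditions provide. For the first assertion, take $\bbR=\col(\mu,{<}\kappa)$ with the filtration $\la\bbR_\alpha:\alpha<\kappa\ra$ given by restricting to coordinates $\leq\alpha$ (so $\bbR_\alpha$ absorbs the factor collapsing $\alpha$ and hence forces $|\alpha|=\mu$) and the coordinate-restriction projections; one checks this is a reasonable $(\mu,\kappa)$-collapse, the last clause of the definition holding at every inaccessible $\alpha$. By Lemma~\ref{absorption} identify $\bbA(\bbR)$ with $\bbA(\mu,\kappa)$, fix $\bbS\cong\calB(\shi(\mu,\kappa))$ with $\bbA(\mu,\kappa)\subseteq\bbS\subseteq\calB(\bbR*\dot\add(\kappa))$, and, by Lemma~\ref{absorption}(3), take the set $X\subseteq\kappa$ realizing both $\dot X_\bbA$ and $\dot X_\shi$, so that $X\cap\alpha$ decodes the $\bbR_\alpha$-generic. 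The conditions $\la\bbR_\alpha,r,\tau\ra$ supplied by $\psi$, with $r\in\bbR_\alpha$ and $\tau$ the canonical name for the characteristic function of $X\cap\alpha$, are dense in $\shi(\mu,\kappa)$, and at inaccessible $\alpha$ they are strong coding conditions, the $\bbR_\alpha$-generic being recovered from $\tau$. Restricting $\alpha$ to an appropriate club of inaccessibles and intersecting with the dense sets witnessing strong codedness gives conditions of level $\alpha$ with $\alpha$ regular in $V$ and of cardinality $\mu$ in $V[X\cap\alpha]$, i.e.\ property~(1); property~(2) comes from the fact that the tail of $\bbA(\mu,\kappa)$ over $V[X\cap\alpha]$ retains the strategic-closure property, by the material of Section~2, and it is here that the restriction $\mu=\omega_1$ is used. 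Intersecting with $\shi(\mu,\kappa)/H$ and using that coding conditions remain dense below any condition finishes the first assertion.

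For the ``furthermore'', the key point is that a nice coding condition determines its local generic already in $V[X]$, definably. Let $c=\la\bbP,p,\tau\ra$ be as in~(\ref{shiquotient}), with associated $\alpha$. Since $c\in\shi(\mu,\kappa)/H$ it belongs to some $\shi(\mu,\kappa)$-generic $\tilde G$ extending $H$; the induced local filter $G_\bbP^\tau$ is $\bbP$-generic over $V$, and by the displayed description of $\dot X_\shi$ together with Lemma~\ref{absorption}(3), $\tau^{G_\bbP^\tau}=\chi_{X\cap\alpha}$, whence $V[G_\bbP^\tau]=V[X\cap\alpha]$ because $c$ is coding. Moreover, codedness makes $G_\bbP^\tau$ the unique $\bbP$-generic $F$ over $V$ with $p\in F$ and $\tau^F=\chi_{X\cap\alpha}$; so it depends only on $X\cap\alpha$, is computable in $V[X\cap\alpha]\subseteq V[X]$, and does not depend on $\tilde G$. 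Write $F_c$ for it.

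Now let $c=\la\bbP,p,\tau\ra$ and $c'=\la\bbQ,q,\sigma\ra$ be as in~(\ref{shiquotient}), with associated ordinals $\alpha,\alpha'$. By symmetry it suffices to show that every $d\leq c$ in $\shi(\mu,\kappa)/H$ is compatible with $c'$. Using the first assertion, shrink $d$ to a nice condition $d^-=\la\bbR,r,\rho\ra\leq d$ of level $\beta\geq\alpha'$, and fix a $\shi(\mu,\kappa)$-generic $\tilde G$ extending $H$ with $d^-\in\tilde G$. Then $F_{d^-}\in V[X]$, and $F_{c'}\in V[X\cap\alpha']\subseteq V[F_{d^-}]$, so there are $r''\in F_{d^-}$ and an $\bbR$-name $\dot E$ with $r''\Vdash_\bbR$ ``$\dot E$ is the unique $\bbQ$-generic $F$ over $V$ with $q\in F$ and $\sigma^F=\rho\restriction\alpha'$''. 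Since $r''\in F_{d^-}$ and $d^-$ is coding, $d^{--}:=\la\bbR,r'',\rho\ra$ still lies in $\tilde G$, hence in $\shi(\mu,\kappa)/H$, and $d^{--}\leq d^-\leq d$. Below $r''$ the name $\dot E$ yields a complete embedding of $\calB(\bbQ)$ into $\calB(\bbR)$, hence a $\mu$-continuous projection $\pi':D_0\to\bbQ$ from a $\mu$-closed dense $D_0\subseteq\bbR$ with $r''\Vdash q\in\pi'(\dot G)$ and $r''\Vdash\pi'^*(\sigma)=\rho\restriction\alpha'\trianglelefteq\rho$. Thus $d^{--}\leq c'$, so $d$ is compatible with $c'$. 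As $d$ was arbitrary, $c\leq^* c'$; symmetrically $c'\leq^* c$, so $c$ and $c'$ are equivalent in the separative quotient of $\shi(\mu,\kappa)/H$.

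The main obstacle is the last step: turning the ``definable generic'' name $\dot E$ into an honest $\mu$-continuous projection on a $\mu$-closed dense subset of $\bbR$ that genuinely meets the requirements in the definition of $\leq$ on $\shi(\mu,\kappa)$. One must use complete $\mu$-closedness of $\bbR$ and $\bbQ$ to see that the associated Boolean projection is $\mu$-continuous and that the conditions whose projection lands in $\bbQ$ form a $\mu$-closed dense subset, and one should invoke the Projection Freezing Lemma~\ref{freeze} to know the outcome is independent of incidental choices, which is what keeps the argument coherent. A secondary point is the verification of property~(2), which is where the restriction to $\mu=\omega_1$ genuinely enters.
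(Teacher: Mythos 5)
Your density argument follows essentially the paper's route (push conditions of the form $\psi(p,\tau)$ with $\tau$ of regular length $\alpha$ and coding into the quotient), though it is sketchy about the one point that matters there: to get density \emph{in} $\shi(\mu,\kappa)/H$ you must choose the condition below a given $a\in\bbA(\mu,\kappa)$ on the $\bbA$-side (i.e.\ with $\pi_\bbA(p,\tau)\leq a$) and then argue by genericity, not just quote density of coding conditions in $\shi(\mu,\kappa)$. Also, your localization of the hypothesis $\mu=\omega_1$ is backwards: property (2) holds for every regular uncountable $\mu$, since $V[X]$ is intermediate between $V[X\cap\alpha]$ and the $\mu$-closed extension $V[G*X]$ and strategic closure passes to such intermediate extensions; the claim and its proof are for general $\mu$, and $\omega_1$ enters only in the \emph{next} claim.

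The genuine gap is in your ``furthermore''. First, the statement concerns two coding conditions of the \emph{same} length $\alpha$; you prove a stronger assertion with different lengths $\alpha,\alpha'$, which for general $\mu$ would make $\shi(\mu,\kappa)/H$ trivial and hence prove Theorem~\ref{specialmu} for all $\mu$ --- something the paper explicitly does not know how to do. Second, the step that carries all the weight --- ``below $r''$ the name $\dot E$ yields a complete embedding of $\calB(\bbQ)$ into $\calB(\bbR)$, hence a $\mu$-continuous projection $\pi':D_0\to\bbQ$ from a $\mu$-closed dense $D_0\subseteq\bbR$'' --- does not follow. A name for a $\bbQ$-generic gives (after strengthening so the relevant Boolean values are nonzero, a point you also skip) a complete embedding, but the associated Boolean projection from $\calB(\bbR)$ to $\calB(\bbQ)$ need not be $\mu$-continuous, need not land in the poset $\bbQ$, and need not be defined on a $\mu$-closed dense subset of $\bbR$; the Projection Freezing Lemma gives only uniqueness of such projections, never existence. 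This is exactly why the paper restricts the ``furthermore'' to equal lengths: there, the two local generics compute each other, so the embedding $e$ has \emph{dense} range and lifts to an isomorphism of cones $\varphi:\calB(\bbP\restriction p_2)\to\calB(\bbQ)\restriction q^*$, and a continuous projection on a $\mu$-closed dense set is read off from $\varphi^{-1}[\bbQ\restriction q^*]$. When the lengths differ (your $\beta>\alpha'$, which occurs even if $\alpha=\alpha'$ once you pass to $d^-$), the range is not dense, and producing the required continuous projection is precisely the quotient problem that the following claim solves only for $\mu=\omega_1$, via strong $\omega_1$-strategic closure, Theorem~\ref{iythm} and Lemma~\ref{strataloon} (identifying the quotient with $\col(\omega_1,\beta)$); the paper remarks this may fail for $\mu>\omega_1$. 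As written, your argument assumes away that obstacle.
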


\begin{proof}
    Let $a \in \bbA(\mu,\kappa)$ and $\la\bbQ,q,\sigma\ra\in \shi(\mu,\kappa)$ be arbitrary such that $a \Vdash \la\bbQ,q,\sigma\ra \in \shi(\mu,\kappa)/\dot H$, so that if $s \in \bbS$ corresponds to $\la\bbQ,q,\sigma\ra$, then $a \leq \pi_\bbA(s)$.  We can find $\la p,\tau \ra \in \col(\mu,{<}\kappa)*\dot\add(\kappa)$ such that:
    \begin{enumerate}
        \item $\pi_\bbA(p,\tau) \leq a$.
        \item There is $\alpha<\kappa$ such that:
        \begin{enumerate}
            \item  $\alpha$ is regular.
            \item $p \in \col(\mu,{\leq}\alpha)$.
            \item $\tau$ is a $\col(\mu,{\leq}\alpha)$-name forced to be a condition of length $\alpha$.
        \end{enumerate}
        \item $\psi(p,\tau)$ is a coding condition below $\la\bbQ,q,\sigma\ra$.
    \end{enumerate} 
 Let $G * X \subseteq \col(\mu,{<}\kappa)*\dot\add(\kappa)$ be a generic possessing $\la p,\tau\ra$.  Let $G' = G \restriction(\alpha+1)$.  Then $V[X \cap \alpha] = V[G']$, and $V[G*X]$ is a $\mu$-closed forcing extension of $V[G']$.  Since $\mu$-strategic closure is inherited by subforcings, $V[X]$ is a $\mu$-strategically closed extension of $V[X \cap \alpha]$.  If $H \subseteq \bbA(\mu,\kappa)$ and $K \subseteq \shi(\mu,\kappa)$ are the induced generic filters, then $\psi(p,\tau) \in K$, and thus $\la\col(\mu,{\leq}\alpha),p,\tau\ra \in \shi(\mu,\kappa)/H$, and it extends $\la\bbQ,q,\sigma\ra$.  By the arbitrariness of $a$ and $\la\bbQ,q,\sigma\ra$, the set of conditions satisfying properties (1)--(3) with respect to some $\alpha$ is forced to be dense.

    For the last claim, suppose $\la\bbP,p,\tau\ra \in \shi(\mu,\kappa)/H$ is coding and of length $\alpha$.  Then it is possible to force further to obtain a generic $G \subseteq \shi(\mu,\kappa)$ such that $\dot X_\shi^G = X$.  Since $\la\bbP,p,\tau\ra$ is coding, there is in $V[X\cap\alpha]$ a unique filter $F \subseteq \bbP$ that is generic over $V$ and such that $\tau^F$ is the characteristic function of $X \cap \alpha$; namely $F = G^\tau_\bbP$.  Likewise, if $\la\bbQ,q,\sigma\ra \in \shi(\mu,\kappa)/H$ is coding and of length $\alpha$, then $G^\sigma_\bbQ$ can be defined in $V[X\cap\alpha]$.  There is a $\bbP$-name $\dot g_\bbQ$ for a filter on $\bbQ$ and some $p_0 \leq p$ in $G^\tau_\bbP$ such that $p_0 \Vdash_\bbP$ ``$\dot g_\bbQ$ is the unique $V$-generic filter $F$ on $\bbQ$ such that $\sigma^F = \tau^{\dot G_\bbP}$.''
    Likewise, there is $\bbQ$-name $\dot g_\bbP$ for a filter on $\bbP$ and some $q_0 \leq q$ in $G^\sigma_\bbQ$ such that $q_0 \Vdash_\bbQ$ ``$\dot g_\bbP$ is the unique $V$-generic filter $F$ on $\bbP$ such that $\tau^F = \sigma^{\dot G_\bbQ}$.''
    Note that $(\dot g_\bbQ)^{G_\bbP^\tau} = G^\sigma_\bbQ$, and $(\dot g_\bbP)^{G_\bbQ^\sigma} = G^\tau_\bbP$.  Let $p_1 \leq p_0$ be in $G^\tau_\bbP$ and force that whenever $g \subseteq \bbP$ is generic, and $h = (\dot g_\bbQ)^g$, then $g = (\dot g_\bbP)^h$.
    Likewise, let $q_1 \leq q_0$ be in $G^\sigma_\bbQ$ and force that whenever $h \subseteq \bbQ$ is generic, and $g = (\dot g_\bbP)^h$, then $h = (\dot g_\bbQ)^g$.

    There is $p_2 \leq p_1$ in $G^\tau_\bbP$ such that for all $p' \leq p_2$, $|| p' \in \dot g_\bbP || \wedge q_1 \not= 0$, since otherwise, the set of $p' \leq p_1$ such that $q_1 \Vdash p' \notin \dot g_\bbP$ would be dense, contradicting the forced genericity of $\dot g_\bbP$.  Thus $e : p' \mapsto || p' \in \dot g_\bbP || \wedge q_1$ is a complete embedding of $\bbP \restriction p_2$ into $\calB(\bbQ) \restriction q^*$, where $q^* =  || p_2 \in \dot g_\bbP || \wedge q_1$.  Furthermore, the range of $e$ is dense in the codomain, since if $q'\leq q^*$, then we can take $h \subseteq \bbQ$ generic with $q' \in h$, and there will be some $p' \leq p_2$ that forces $q' \in \dot g_\bbQ$.  Thus if $h' \subseteq \bbQ$ is any generic with $e(p') \in h'$, then $p' \in g = (\dot g_\bbQ)^{h'}$, and $q' \in (\dot g_\bbQ)^g = h'$.  Thus $e(p') \leq q'$.
    
    

    If $e^*(\sigma)$ is the translation of $\sigma$ into a $\bbP$-name via $e$, we want to show that $p_2 \Vdash e^*(\sigma) = \tau$.  Let $g \subseteq \bbP$ be generic with $p_2 \in g$, and let $h \subseteq \bbQ$ be the generic generated by $e[g]$.  Then $g = (\dot g_\bbP)^h$ by the definition of $e$, and thus $h = (\dot g_\bbQ)^g$.  It follows that $\tau^g = \sigma^h = e^*(\sigma)^g$.
    
    
    $e$ lifts to an isomorphism $\varphi : \calB(\bbP \restriction p_2) \to \calB(\bbQ) \restriction q^*$.  Note that $\varphi$ is continuous, and $\bbP$ and $\bbQ$ appear as $\mu$-closed dense subsets of their respective Boolean completions.  The set $D = (\bbP \restriction p_2) \cap \varphi^{-1}[\bbQ \restriction q^*]$ is a dense $\mu$-closed subset of $\bbP \restriction p_2$.  Thus $\varphi \restriction D$ witnesses that $\la\bbP,p_2,\tau\ra \leq \la\bbQ,q,\sigma\ra$ in $\shi(\mu,\kappa)$.  Since $p_2 \in G^\tau_\bbP$, $\la\bbP,p_2,\tau\ra \in G$ for any generic $G \subseteq \shi(\mu,\kappa)$ with $\dot X^G_\shi = X$.  Therefore, whenever we take a generic $G \subseteq \shi(\mu,\kappa)/H$ with $\la\bbP,p,\tau\ra \in G$, there will be some $p'\in\bbP$ such that $\la\bbP,p',\tau\ra\in G$ and $\la\bbP,p',\tau\ra\leq\la\bbQ,q,\sigma\ra$, showing the desired claim.
    \end{proof}

    \begin{claim}
        Let $H \subseteq \bbA(\omega_1,\kappa)$ be generic over $V$.  Then every two conditions in the dense subset of $\shi(\omega_1,\kappa)/H$ from Claim~\ref{equivcond} are compatible in $\shi(\omega_1,\kappa)/H$.
    \end{claim}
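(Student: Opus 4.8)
Let $\la\bbP,p,\tau\ra$ and $\la\bbQ,q,\sigma\ra$ be two conditions in the dense subset of $\shi(\omega_1,\kappa)/H$ from Claim~\ref{equivcond}; by symmetry we may assume they come with witnesses $\alpha\leq\beta$ as in that claim. If $\alpha=\beta$ they are equivalent in the separative quotient by the last assertion of Claim~\ref{equivcond}, so assume $\alpha<\beta$. The plan is to find $q'\leq q$ in $\bbQ$ such that, with $\sigma'$ denoting the restriction of $\sigma$ to a $(\bbQ\restriction q')$-name, the condition $\la\bbQ\restriction q',q',\sigma'\ra$ lies below both $\la\bbP,p,\tau\ra$ and $\la\bbQ,q,\sigma\ra$ in $\shi(\omega_1,\kappa)$ and belongs to $\shi(\omega_1,\kappa)/H$; this is the desired common extension. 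One inequality is automatic: the inclusion $\bbQ\restriction q'\hookrightarrow\bbQ$ is an $\omega_1$-continuous projection, it pulls $\sigma$ back to $\sigma'$, and it carries the top of $\bbQ\restriction q'$ (namely $q'$) to $q'\leq q$, so the associated generic filter on $\bbQ$ is forced to contain $q$; hence $\la\bbQ\restriction q',q',\sigma'\ra\leq\la\bbQ,q,\sigma\ra$.

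Work in $V[H]$ and set $X=\dot X_\bbA^H$. Since $\la\bbQ,q,\sigma\ra\in\shi(\omega_1,\kappa)/H$, fix a $\shi(\omega_1,\kappa)/H$-generic $K$ over $V[H]$ containing it; then $K$ is $\shi(\omega_1,\kappa)$-generic over $V$ with $\dot X_\shi^K=X$ (Lemma~\ref{absorption}(3)), and by the coding property the local filter $G^\sigma_\bbQ=\{q''\in\bbQ:\la\bbQ,q'',\sigma\ra\in K\}$ is the unique $V$-generic filter on $\bbQ$ with $\sigma^{G^\sigma_\bbQ}$ the characteristic function of $X\cap\beta$. By uniqueness this filter depends only on $X\cap\beta$, so $G^\sigma_\bbQ\in V[H]$ and $V[G^\sigma_\bbQ]=V[X\cap\beta]$. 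As $\alpha<\beta$, the set $X\cap\alpha$ is an initial segment of $X\cap\beta$, so $X\cap\alpha\in V[G^\sigma_\bbQ]$; and since $\la\bbP,p,\tau\ra\in\shi(\omega_1,\kappa)/H$, its coding property supplies the unique $V$-generic filter $G^\tau_\bbP$ on $\bbP$ with $\tau^{G^\tau_\bbP}$ the characteristic function of $X\cap\alpha$ and with $p\in G^\tau_\bbP$, lying in $V[X\cap\alpha]\subseteq V[G^\sigma_\bbQ]$. Fixing a $\bbQ$-name $\dot g$ with $\dot g^{G^\sigma_\bbQ}=G^\tau_\bbP$ and applying the forcing theorem in $V$, we obtain $q'\leq q$ in $G^\sigma_\bbQ$ forcing that $\dot g$ is a $V$-generic filter on $\bbP$ with $p\in\dot g$ and $\tau^{\dot g}=\sigma^{\dot G_\bbQ}\restriction\alpha$.

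Then $\iota:\calB(\bbP\restriction p)\to\calB(\bbQ\restriction q')$, $\iota(b)=||\,\check b\in\dot g\,||$, is a complete embedding with $\iota(p)$ the top element. It is $\omega_1$-continuous: a $V$-generic filter on a separative completely $\omega_1$-closed poset is closed under greatest lower bounds of descending sequences of length ${<}\omega_1$ (a density argument carried out in $V$ for each such sequence of elements of $\bbP$), so $q'$ forces $\dot g$ to be so closed, whence $\iota$ preserves ${<}\omega_1$-limits of descending sequences. From an $\omega_1$-continuous complete embedding between the completions of separative completely $\omega_1$-closed posets one obtains, by the standard argument --- the Boolean-algebra construction in the proof of Claim~\ref{equivcond}, run with a complete embedding in place of an isomorphism; cf.\ \cite{eh} --- an $\omega_1$-closed dense $D'\subseteq\bbQ\restriction q'$ carrying an $\omega_1$-continuous projection $\pi:D'\to\bbP\restriction p$ such that, for every $V$-generic $G\subseteq\bbQ$ through $q'$, the filter generated by $\pi[G\cap D']$ equals $\dot g^G$. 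This $\pi$ witnesses $\la\bbQ\restriction q',q',\sigma'\ra\leq\la\bbP,p,\tau\ra$: since $\iota(p)$ is the top element, $\pi$ carries the top to a condition $\leq p$, so $q'\Vdash p\in\pi(\dot G_\bbQ)$; and for $G$ as above $\pi^*(\tau)^G=\tau^{\dot g^G}=\sigma^G\restriction\alpha\trianglelefteq(\sigma')^G$, so $q'\Vdash\pi^*(\tau)\trianglelefteq\sigma'$.

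It remains to see that $\la\bbQ\restriction q',q',\sigma'\ra\in\shi(\omega_1,\kappa)/H$. It is itself a coding condition (a $V$-generic filter on $\bbQ\restriction q'$ containing the top corresponds to one on $\bbQ$ containing $q'$, and both the recovery $V[\cdot]=V[(\sigma')^{(\cdot)}]$ and the uniqueness clause transfer from $\la\bbQ,q,\sigma\ra$), and its coding data is consistent with $X$: indeed $q'\in G^\sigma_\bbQ$, the canonical reconstruction of $X\cap\beta$, so the condition is compatible in $\bbS$ with the $\bbA(\omega_1,\kappa)$-generic $X$, i.e.\ its value in $\bbS$ has $\bbA(\omega_1,\kappa)$-projection in $H$. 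Hence $\la\bbQ\restriction q',q',\sigma'\ra$ is a common extension of $\la\bbP,p,\tau\ra$ and $\la\bbQ,q,\sigma\ra$ in $\shi(\omega_1,\kappa)/H$. The only non-mechanical step is the passage from $\iota$ to the projection $\pi$ on a dense $\omega_1$-closed set; this is precisely where complete $\omega_1$-closure, via the $\omega_1$-continuity of $\iota$, is used, and it is the exact analogue --- for a complete embedding rather than an isomorphism --- of the construction already carried out in the proof of Claim~\ref{equivcond}.
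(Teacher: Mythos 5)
Your setup runs parallel to the paper's up to the construction of the complete embedding, and your continuity observation is fine: generic filters on separative completely $\omega_1$-closed posets are indeed closed under infima of ground-model descending sequences, so $\iota(b)=||\check b\in\dot g||$ is an $\omega_1$-continuous complete embedding. The gap is the step you dismiss as ``the standard argument'': passing from $\iota$ to an $\omega_1$-closed dense $D'\subseteq\bbQ\restriction q'$ carrying an $\omega_1$-continuous projection $\pi:D'\to\bbP$ whose pointwise image generates $\dot g$. In the last part of Claim~\ref{equivcond} the two conditions had the \emph{same} length, the embedding $e$ had dense range, hence extended to an isomorphism $\varphi$ of Boolean completions, and the witnessing map was obtained simply by restricting $\varphi$ to $(\bbP\restriction p_2)\cap\varphi^{-1}[\bbQ\restriction q^*]$. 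Here $\alpha<\beta$, $\iota$ is not dense, there is no inverse to restrict, and what is needed is a map in the \emph{opposite} direction with strong structural properties: its existence amounts to saying that the quotient $\calB(\bbQ\restriction q')/\iota[\dot G_\bbP]$ admits (densely) a countably closed presentation, i.e.\ that $\bbQ$ below $q'$ is, up to a dense $\omega_1$-closed subset, a genuinely $\omega_1$-continuous two-step iteration over $\bbP$. That does not follow from completeness plus $\omega_1$-continuity of $\iota$ alone, and the canonical Boolean projection associated to $\iota$ neither takes values in $\bbP$ nor is it $\omega_1$-continuous in general.

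This missing step is precisely where the paper does its work, and where hypothesis (2) of Claim~\ref{equivcond} --- which your argument never invokes --- is used: since $V[X\cap\beta]$ is intermediate between $V[X\cap\alpha]$ and the $\sigma$-strategically closed extension $V[X]$, and $\sigma$-strategic closure passes to subforcings, the quotient over the embedded copy of $\bbP$ is $\sigma$-strategically closed; Theorem~\ref{iythm} (Ishiu--Yoshinobu) upgrades this to strong $\omega_1$-strategic closure, and Lemma~\ref{strataloon} then yields $\calB(\bbQ)\restriction q^*\cong\calB(\bbP\restriction p_2*\dot\col(\omega_1,\beta))$, from which the $\omega_1$-closed dense set and countably continuous projection are read off. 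A telling sign that your shortcut cannot be ``standard'' is that it nowhere uses $\mu=\omega_1$: if it worked, it would prove the analogous compatibility statement for every regular uncountable $\mu$, whereas the remark following the paper's proof states that for general $\mu$ the argument is only known to reach the complete embedding with $\mu$-strategically closed quotient, and stalls exactly because strong strategic closure (and hence Lemma~\ref{strataloon}) may fail there.
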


This claim suffices to prove Theorem~\ref{specialmu}, since it implies that the quotient forcing between $\bbA(\omega_1,\kappa)$ and $\shi(\omega_1,\kappa)$ is always trivial.

\begin{proof}
    Let $H \subseteq \bbA(\omega_1,\kappa)$ be generic, and let $\la\bbP,p_0,\tau\ra,\la\bbQ,_0,\sigma\ra$ be in the dense subset of $\shi(\omega_1,\kappa)/H$ from Claim~\ref{equivcond}, with respective witnessing ordinals $\alpha\leq\beta$.  Let $X \subseteq\kappa$ be the set of ordinals that generates $H$.  As in the proof of Claim~\ref{equivcond}, there are filters $G_\bbP^\tau \subseteq \bbP$ and $G_\bbQ^\sigma\subseteq\bbQ$ definable in $V[X \cap \alpha],V[X\cap\beta]$ respectively, with the property that $V[X\cap\alpha]=V[G_\bbP^\tau]$ and $V[X\cap\beta] = V[G_\bbQ^\sigma]$.  These filters are the unique generic filters over $V$ that evaluate the names $\tau,\sigma$ to the characteristic functions of $X\cap \alpha,X\cap\beta$ respectively.

    Let $q_1\leq q_0$ be in $G_\bbQ^\sigma$ and force that $\dot g_\bbP$ is the unique $\bbP$-generic filter over $V$ that evaluates $\tau$ to $\sigma \restriction \alpha$.  Note that $(\dot g_\bbP)^{G_\bbQ^\sigma} = G_\bbP^\tau$.  As in the proof of Claim~\ref{equivcond}, there must be some $p_1 \leq p_0$ in $G_\bbP^\tau$ such that for all $p \leq p_1$, $q_1 \wedge || p \in \dot g_\bbP || \not= 0$
    The map $p \mapsto q_1 \wedge || p \in \dot g_\bbP ||$ is a complete embedding of $\bbP \restriction p_1$ into $\calB(\bbQ) \restriction (q_1 \wedge || p_1 \in \dot g_\bbP ||)$.

    Since $V[X]$ is an $\omega_1$-strategically closed extension of $V[X \cap \alpha]$, and $V[X \cap \beta]$ is an intermediate model, $V[X \cap \beta]$ is an $\omega_1$-strategically closed extension of $V[X \cap \alpha]$.  There is $q_2 \leq q_1$ in $G^\sigma_\bbQ$ that forces this, and there is $p_2 \leq p_1$ in $G^\tau_\bbP$ such that $e : p \mapsto q_2 \wedge || p \in \dot g_\bbP ||$ is a complete embedding of $\bbP \restriction p_2$ into $\calB(\bbQ) \restriction (q_2 \wedge || p_2 \in \dot g_\bbP ||)$.  
    By Theorem~\ref{iythm} and Lemma~\ref{strataloon}, 
    $$\calB(\bbQ) \restriction (q_2 \wedge || p_2 \in \dot g_\bbP ||) \cong \calB(\bbP \restriction p_2 * \dot\col(\omega_1,\beta)).$$
    It follows that there is a countably closed dense $D \subseteq \bbQ$ and a countably continuous projection $\pi : D \to \bbP$, with the property that $q_2 \wedge || p_2 \in \dot g_\bbP ||$ forces that $\pi[\dot G_\bbQ]$ generates $\dot g_\bbP$.  If $q_3 \leq q_2 \wedge || p_2 \in \dot g_\bbP ||$ is in $G^\sigma_\bbQ$, then $\la \bbQ,q_3,\sigma \ra \leq \la \bbP,p_0,\tau\ra$, and $\la \bbQ,q_3,\sigma \ra \in \shi(\omega_1,\kappa)/H$.
\end{proof}

We remark that, for general $\shi(\mu,\kappa)$, the above argument can be carried out up to the point of getting the complete embedding from $\bbP$ to $\calB(\bbQ)$ (restricted to some conditions) with $\mu$-strategically closed quotient.  But perhaps it is not strongly $\mu$-strategically closed, and so Lemma~\ref{strataloon} is not applicable.  

Let $D \subseteq \shi(\mu,\kappa)/H$ be the dense set from Claim~\ref{equivcond}, and let $f : D \to \kappa$ give the witnessing ordinal $\alpha$ to membership in $D$.  For any generic $G \subseteq \shi(\mu,\kappa)/H$, let $Y_G = f[D\cap G]$.  Then $Y_G$ is some unbounded subset of $\kappa$, and $V[H][Y_G] = V[G]$, by Claim~\ref{equivcond}.  When $\mu = \omega_1$, we have that $Y_G = f[D]$, but perhaps for $\mu>\omega_1$, $Y_G \notin V[H]$.

\section{An alternative argument for a dense ideal on $\omega_2$}

Theorem~\ref{iythm} and Lemma~\ref{strataloon} imply that $\bbA(\omega_1,\kappa) \times \col(\omega_1,\kappa)$ is forcing-equivalent to $\col(\omega_1,\kappa)$.  It follows that, for inaccessible $\lambda>\kappa$, $[\bbA(\omega_1,\kappa) \times \col(\omega_1,\kappa)] * \dot\col(\kappa,{<}\lambda)^{\bbA(\omega_1,\kappa)}$ is equivalent to a reasonable $(\omega_1,\lambda)$-collapse.  This observation leads to an adjustment of the proof of Theroem 2.20 in \cite{eskewdense} to show that, if $\kappa$ is almost-huge with target $\lambda$, then in $V^{\bbA(\omega_1,\kappa)*\dot\col(\kappa,{<}\lambda)}$, we can lift the almost-hugeness embedding by forcing with $\col(\omega_1,\kappa)$, without the need for a generic for the quotient between $\bbA(\omega_1,\kappa)$ and $\col(\omega_1,{<}\kappa)*\dot\add(\kappa)$.  Choiceless submodels still make some appearance in the lifting procedure.  We conclude that in $V^{\bbA(\omega_1,\kappa)*\dot\col(\kappa,{<}\lambda)}$, there is a normal ideal $I$ on $\omega_2 =\kappa$ such that $\p(\kappa)/I \cong \calB(\col(\omega_1,\omega_2))$.

We would like to present here a direct argument for the existence of such an ideal on $\omega_2$ after forcing with $\shi(\omega_1,\kappa) * \dot\col(\kappa,{<}\lambda)$, using the idea of Lemma~\ref{strataloon} as a key component.  This will avoid the use of choiceless submodels.  Rather than appeal to Theorem~\ref{iythm} abstractly, we will directly build a strategy witnessing the strong $\omega_1$-strategic closure of $\shi(\omega_1,\kappa)$ that will have some additional useful properties.

Let us recall the notion of an inverse limit of partial orders under a system of projections.
Suppose $\vec\bbP = \la \bbP_\alpha : \alpha < \delta \ra$ is a sequence of posets and $\vec\pi = \la \pi_{\beta\alpha} : \alpha<\beta<\delta \ra$ is a sequence such that $\pi_{\beta\alpha} : \bbP_\beta \to \bbP_\alpha$ is a projection.  Suppose also that whenever $\alpha<\beta<\gamma<\delta$, then $\pi_{\gamma\alpha} = \pi_{\beta\alpha}\circ\pi_{\gamma\beta}$.   Such a pair of sequences is called an \emph{inverse system}, and $\delta$ is referred to as the \emph{length} of the system.  We define the \emph{inverse limit} of the system, $\varprojlim(\vec\bbP,\vec\pi)$, to be the set of all sequences $\la p_i : i <\delta \ra$ such that for $\alpha<\beta<\delta$, $p_\alpha = \pi_{\beta\alpha}(p_\beta)$.  The ordering on $\varprojlim(\vec\bbP,\vec\pi)$ is pointwise.

We also allow inverse systems on which the maps $\pi_{\beta\alpha}$ are not necessarily defined at every point of $\bbP_\beta$, and the maps are not assumed to commute everywhere.
In this case, we take $\varprojlim(\vec\bbP,\vec\pi)$, to be the set of all sequences $\la p_i : i <\delta \ra$ such that for all $\alpha<\beta<\delta$, $p_\beta \in \dom \pi_{\beta\alpha}$ and $p_\alpha = \pi_{\beta\alpha}(p_\beta)$.  

Lemma 15 of \cite{eh} shows that, if we have a commuting inverse system $\la\vec\bbP,\vec\pi\ra$ of length $\omega$, where each $\bbP_i$ is assumed to be completely $\omega_1$-closed, and the projections $\pi_{ji}$ are $\omega_1$-continuous and defined on $\omega_1$-closed dense sets, then the inverse limit is a completely $\omega_1$-closed poset that comes with a natural system of commuting $\omega_1$-continuous projections to each $\bbP_i$.  For inverse systems of uncountable length, it seems that we need to assume that this behavior already occurs at many limit points in order to draw an analogous conclusion.

Let us now define our strategy for Player II in $\calG^{\mathrm{I}}_{\omega_1}(\shi(\omega_1,\kappa))$.  We require II to play coding conditions at each round.  We will also carry a record of projections between the posets of II's plays that witness their descending ordering, and we require each of these maps to be defined \emph{everywhere}.  This last requirement is important because, per Remark 34 in \cite{eh}, failure to do this might produce a descending $\omega$-sequence of conditions in $\shi(\omega_1,\kappa)$ without a lower bound.

Assume $\la a_0,b_0,a_1,b_1,\dots \ra$ is a run of the game of length $\alpha<\omega_1$ where II plays according to a strategy that satisfies the above requirements.  So each $b_i$ is a coding condition of the form $\la\bbP_i,p_i,\tau_i\ra$, and for $i<j<\alpha$, we have an $\omega_1$-continuous projection $\pi_{ji} : \bbP_j \to \bbP_i$.  First assume $\alpha = \beta+1$, let $b_{\beta} = \la\bbP,p,\tau\ra$ be the last move of II, and let $\la\bbQ,q,\sigma\ra$ be the next play of I.  First choose a coding condition $\la\bbP',p',\tau'\ra\leq\la\bbQ,q,\sigma\ra$, and let $\pi'$ witness the ordering $\la\bbP',p',\tau'\ra\leq\la\bbP,p,\tau\ra$.  Then let $\bbP'' = \bbP' \cap \dom \pi'$, which is an $\omega_1$-closed dense suborder, and let $\pi_{\beta\alpha} = \pi' \restriction \bbP''$.  For $\gamma<\beta$, let $\pi_{\alpha\gamma} = \pi_{\beta\gamma}\circ\pi_{\alpha\beta}$.  Let $p'' \in \bbP''$ be below $p'$, and let II's next play be $\la\bbP'',p'',\tau'\ra$.

Next suppose $\alpha$ is a limit.  Let $\la\alpha_i : i < \omega \ra$ be an increasing sequence converging to $\alpha$.  Let $\la\la\bbP_{\alpha_i},p_{\alpha_i},\tau_{\alpha_i}\ra : i < \omega  \ra$ list the plays of II at rounds $\alpha_i$.  Since each play of II is coding, we have that whenever $i < j <k < \omega$ and $p \leq p_{\alpha_k}$, then $\pi_{\alpha_k,\alpha_i}(p) = \pi_{\alpha_j,\alpha_i}\circ\pi_{\alpha_k,\alpha_j}(p)$.  For each $i < \omega$, let $p^*_{\alpha_i} = \inf \{ \pi_{\alpha_j,\alpha_i}(p_{\alpha_j}) : i < j < \omega \}$.
Let $\bbP$ be the inverse limit of the system $\la\la \bbP_{\alpha_i} : i < \omega \ra,\la\pi_{\alpha_j,\alpha_i} : i < j < \omega\ra\ra$.  We have that $\la p^*_{\alpha_i} : i < \omega \ra \in \bbP$, and below this condition, the system of restriction maps $p \mapsto p(i)$ is a commuting system of $\omega_1$-continuous projections from $\bbP$ to the $\bbP_{\alpha_i}$'s.  Therefore, a lower bound to the sequence of plays exists.

Player I responds with some lower bound $\la\bbQ,q,\sigma\ra$ at round $\alpha$.  As explained in \cite[footnote 7]{eh}, $\bbQ$ does \emph{not} have to project to the inverse limit of the previous rounds.  But whatever I plays, II can play a coding condition $\la\bbP,p,\tau\ra$ below it.  II then chooses a system of projections $\la \pi'_{\alpha\beta} : \beta<\alpha \ra$ from $\bbP$ to the posets of II's previous plays witnessing that $\la\bbP,p,\tau\ra$ is below them.  II then takes $\bbP_\alpha$ to be $\bigcap_{\beta<\alpha} \dom \pi'_{\alpha\beta}$, which is a dense $\omega_1$-closed subset of $\bbP$, and puts $\pi_{\alpha\beta} = \pi'_{\alpha\beta} \restriction \bbP_\alpha$.  II plays $\la\bbP_\alpha,p_\alpha,\tau\ra$, where $p_\alpha \in \bbP_\alpha$ is below $p$.

This completes the construction of the strategy for II; call it $\Sigma$.
Now, the proof of Lemma~\ref{strataloon} gives a way of building a dense copy of $\col(\omega_1,\kappa)$ in $\shi(\omega_1,\kappa) \times\col(\omega_1,\kappa)$ using $\Sigma$.  We use it to construct a certain master condition for $\shi(\omega_1,\kappa)$ in $\shi(\omega_1,\lambda)$, for inaccessible $\lambda>\kappa$.  This is similar to Lemma 43 of \cite{eh}, but simpler.

Let $\Sigma'$ be a winning strategy for II in $\calG^{\mathrm{I}}_{\omega_1}(\shi(\omega_1,\kappa)\times\col(\omega_1,\kappa))$, where the first coordinate follows $\Sigma$ and the second coordinate just plays below the previous moves arbitrarily (which works by countable closure).  Using the construction of Lemma \ref{strataloon}, build a dense tree $T \subseteq \shi(\omega_1,\kappa)\times\col(\omega_1,\kappa)$ that is isomorphic to the set of $p \in \col(\omega_1,\kappa)$ whose domain is a successor ordinal.  By the construction, every (upward-closed) branch through the tree comes along with a run of the game where II plays according to $\Sigma'$, and the plays of II are the nodes of the tree that are in the branch.

Since the posets appearing in $\shi(\omega_1,\lambda)$ need to be completely $\omega_1$-closed $T$ is not quite suitable.  But we can correct this by artificially adding infima to every bounded branch of $T$, obtaining $T' \supseteq T$ that is isomorphic to the set of conditions $p \in \col(\omega_1,\kappa)$ whose domain is an ordinal.

As in the previous section, let $\dot X_\shi$ be the name for the special subset of $\kappa$ added by $\shi(\omega_1,\kappa)$.  We interpret this as a $T'$-name using the factorization above.  Let $\tau_X$ be a $T'$-name for the characteristic function of $\dot X_\shi$.  Let $\la\bbP,p,\tau\ra$ be a node of $T$.
\begin{claim}
\label{mc}
    For inaccessible $\lambda >\kappa$, $\la T',\la\bbP,p,\tau\ra,\tau_X \ra$ is below $\la\bbP,p,\tau\ra$ in $\shi(\omega_1,\lambda)$.
\end{claim}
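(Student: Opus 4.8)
Write $t_0=\la\bbP,p,\tau\ra$, and let $\alpha_0$ be the round at which $t_0$ appears as a move of II in the game runs associated with branches through $T$. The plan is to verify the definition of $\la T',t_0,\tau_X\ra\leq\la\bbP,p,\tau\ra$ in $\shi(\omega_1,\lambda)$ directly, by producing the required projection from $T'$ to $\bbP$. First I would check that both triples lie in $\shi(\omega_1,\lambda)$: since $|T'|=\kappa$ and $\lambda$ is inaccessible, every element of $T'$ (a condition of $\shi(\omega_1,\kappa)\times\col(\omega_1,\kappa)$, or an adjoined greatest lower bound of a bounded branch) has rank below $\lambda$, so $T'\in V_\lambda$; it is separative and completely $\omega_1$-closed by construction, $t_0\in T'$, and $\tau_X$ is a $T'$-name for a function $\kappa\to 2$ with $\kappa<\lambda$; and $\la\bbP,p,\tau\ra\in\shi(\omega_1,\kappa)\subseteq\shi(\omega_1,\lambda)$ since $\bbP\in V_\kappa$.

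To define the projection $\pi$: for a $T$-node $s$ below $t_0$, occurring at round $\alpha\geq\alpha_0$, the branch of $T$ from the root down to $s$ corresponds to an initial segment of a run of $\calG^{\mathrm{I}}_{\omega_1}(\shi(\omega_1,\kappa)\times\col(\omega_1,\kappa))$ in which II follows $\Sigma'$; projecting to the first coordinate gives a run following $\Sigma$ whose moves of II are coding conditions $\la\bbP_\beta,p_\beta,\tau_\beta\ra$ $(\beta\leq\alpha)$ with $\la\bbP_{\alpha_0},p_{\alpha_0},\tau_{\alpha_0}\ra=\la\bbP,p,\tau\ra$, equipped with everywhere-defined, commuting, $\omega_1$-continuous projections $\pi_{\beta\gamma}\colon\bbP_\beta\to\bbP_\gamma$; refining II's moves we may also assume $\pi_{\beta\gamma}(p_\beta)\leq p_\gamma$. (We suppress the $\col(\omega_1,\kappa)$-components of $T$-nodes.) Set $\pi(s)=\pi_{\alpha\alpha_0}(p_\alpha)$. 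For an adjoined node $s^*$ below $t_0$, the greatest lower bound of a bounded branch of $T$, set $\pi(s^*)=\inf_n\pi(s_n)\in\bbP$ for a cofinal $\omega$-sequence $\la s_n\ra$ of $T$-nodes on that branch, using that $\bbP$ is completely $\omega_1$-closed; this is independent of $\la s_n\ra$ by monotonicity. Off the cone below $t_0$ the definition of $\pi$ is immaterial and can be filled in so as to keep $\pi$ a projection, so that $\pi$ becomes an $\omega_1$-continuous map from a dense $\omega_1$-closed subset $D\subseteq T'$ to $\bbP$.

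The crux is that $\pi$ is a projection. Monotonicity on $D$ follows from $\pi_{\alpha\alpha_0}=\pi_{\beta\alpha_0}\circ\pi_{\alpha\beta}$, from $\pi_{\alpha\beta}(p_\alpha)\leq p_\beta$, and from monotonicity of infima; $\omega_1$-continuity is built into the definition at the adjoined nodes. For the refinement property one mimics the proof of Lemma~\ref{strataloon}: given $s\in D$ below $t_0$ at round $\alpha$ and $r\leq\pi(s)$, choose $p'\leq p_\alpha$ in $\bbP_\alpha$ with $\pi_{\alpha\alpha_0}(p')\leq r$; then $\la\bbP_\alpha,p',\tau_\alpha\ra\leq\la\bbP_\alpha,p_\alpha,\tau_\alpha\ra$ in $\shi(\omega_1,\kappa)$, so by density of $T$ in the product and the freedom in continuing the game run there is a $T$-node $s'\leq s$ whose poset $\bbP_{\alpha'}$ satisfies $\pi_{\alpha'\alpha}(p_{\alpha'})\leq p'$—the Projection Freezing Lemma, applicable since every move of II is a coding condition, identifies $\pi_{\alpha'\alpha}$ as the relevant projection—whence $\pi(s')\leq r$. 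I expect this verification to be the main obstacle, in particular the refinement property below an adjoined node $s^*$: one must show that for $r\leq\pi(s^*)$ there is a $T$-node $s'\leq s^*$ with $\pi(s')\leq r$, which should follow from the density of II's moves among the lower bounds of a branch in the tree construction together with $\omega_1$-continuity of the recorded projections, but requires care.

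It remains to check the two forcing statements. Since $\pi(t_0)=\pi_{\alpha_0\alpha_0}(p_{\alpha_0})=p$ and $t_0\Vdash t_0\in\dot G$, we get $t_0\Vdash_{T'}p\in\pi(\dot G)$. For the other, force with $T'$ below $t_0$, let $G$ be the generic, $G_\shi$ the induced $\shi(\omega_1,\kappa)$-generic (via $T'$ dense in $\shi(\omega_1,\kappa)\times\col(\omega_1,\kappa)$), and $X=\dot X_\shi^{G_\shi}$, so $\tau_X^G$ is the characteristic function of $X$. Along the generic branch, for each $T$-node $s$ at round $\alpha$ the recorded projection $\pi_{\alpha\alpha_0}$ witnesses $\la\bbP_\alpha,p_\alpha,\tau_\alpha\ra\leq\la\bbP,\pi(s),\tau\ra$ in $\shi(\omega_1,\kappa)$ (the third coordinate is unchanged, and $p_\alpha\in\dot G_{\bbP_\alpha}$ gives $\pi(s)=\pi_{\alpha\alpha_0}(p_\alpha)\in\pi_{\alpha\alpha_0}[\dot G_{\bbP_\alpha}]$), and $\la\bbP_\alpha,p_\alpha,\tau_\alpha\ra\in G_\shi$, so $\la\bbP,\pi(s),\tau\ra\in G_\shi$, i.e.\ $\pi(s)\in G_\bbP^\tau$. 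Hence the $\bbP$-generic filter $F$ induced from $G$ by $\pi$ contains the $\bbP$-generic filter $G_\bbP^\tau$, so $F=G_\bbP^\tau$. Therefore $(\pi^*\tau)^G=\tau^F=\tau^{G_\bbP^\tau}$, which as in the proof of Claim~\ref{equivcond} is the characteristic function of $X\cap\dom\tau$, an initial segment of $\tau_X^G$. As $G$ was an arbitrary generic containing $t_0$, this gives $t_0\Vdash_{T'}\pi^*(\tau)\trianglelefteq\tau_X$, which together with the above establishes $\la T',t_0,\tau_X\ra\leq\la\bbP,p,\tau\ra$ in $\shi(\omega_1,\lambda)$.
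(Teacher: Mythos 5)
Your setup --- defining the map on $T$-nodes below $\la\bbP,p,\tau\ra$ by applying the $\Sigma$-recorded projection to the condition coordinate, extending to the adjoined infima by taking infima along a cofinal $\omega$-sequence, and reducing the name clause to the fact that $\tau$ is forced to be an initial segment of $\tau_X$ --- is the same as the paper's, and your successor-node verification (strengthen inside $\bbP_\alpha$, use density of $T$, then coding/Lemma~\ref{freeze} to identify the recorded projection) also matches. But the step you flag as ``requiring care'' and leave unproved, namely the refinement property below an adjoined limit node $s^*$, is exactly the nontrivial content of the paper's argument, and the route you gesture at (``density of II's moves among the lower bounds of a branch together with $\omega_1$-continuity'') does not suffice on its own: the $T$-nodes below $s^*$ arise as II's responses to an arbitrary lower bound played by I at the limit round, and, as the paper points out (citing footnote 7 of \cite{eh}), that lower bound need not project to the inverse limit of the branch. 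So knowing that such nodes exist in abundance gives you no control over where their recorded projections to $\bbP$ land; what has to be shown is that their $\rho$-values are dense below $\rho(s^*)=\inf_i\pi_i(q_i)$, and nothing in your sketch delivers that.

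The missing idea is the inverse-limit construction: given $p'\le\rho(s^*)$, form the inverse limit $\bbQ^*$ of the branch posets under the recorded projections (which commute below the played conditions because II's moves are coding), and the condition $q^*$ with $q^*(i)=\inf_{j>i}\pi_{ji}(q_j)$; then $\la\bbQ^*,q^*,\sigma^*\ra$ is a lower bound of the branch, $q^*(0)=\rho(s^*)$, and restriction to coordinate $0$ is a projection, so $q^*$ may be strengthened to $q^{**}$ with $q^{**}(0)\le p'$. Density of $T$ as a subset of the whole poset (not just along continuations of the game run) yields a node $\la\bbR,r,\chi\ra\in T$ below $\la\bbQ^*,q^{**},\sigma^*\ra$, which is automatically tree-below $s^*$, and since $\la\bbP,p,\tau\ra$ is coding, Lemma~\ref{freeze} forces the $\Sigma$-chosen projection of $r$ to $\bbP$ to agree with the value computed through $\bbQ^*$, hence to be $\le p'$. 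Without this detour the limit case of the projection property --- the heart of Claim~\ref{mc} --- is not established, so your proof as written has a genuine gap at its central point. Two smaller inaccuracies to fix: the recorded projections at limit rounds commute only below the played conditions (via coding and Lemma~\ref{freeze}), not everywhere; and at the end, $\pi[G]\subseteq G^\tau_\bbP$ shows the induced filter is \emph{contained in} $G^\tau_\bbP$, from which equality of the two generic filters follows.
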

\begin{proof}
    For nodes $\la\bbQ,q,\sigma\ra \in T$ below $\la\bbP,p,\tau\ra$, the strategy $\Sigma$ carries an $\omega_1$-continuous projection $\pi : \bbQ \to \bbP$ witnessing the ordering.  We define a map $\rho : T' \restriction \la\bbP,p,\tau\ra$ first on the successor nodes, where at such $\la\bbQ,q,\sigma\ra \in T$, we put $\rho(\la\bbQ,q,\sigma\ra) = \pi(q)$ where $\pi : \bbQ \to \bbP$ is the projection chosen by $\Sigma$.  For limit nodes $t \in T'$ below $\la\bbP,p,\tau\ra$, first select a sequence $\la\la\bbQ_i,q_i,\sigma_i\ra : i <\omega\ra$ of nodes of $T$ converging to $t$.  Let $\pi_i : \bbQ_i \to \bbP$ be the projection chosen by $\Sigma$.  Since the nodes of $T$ are coding, these projections commute below the conditions $q_i$, and $\la \pi_i(q_i) : i < \omega \ra$ is a descending sequence in $\bbP$.  We define $\rho(t) = \inf_i \pi_i(q_i)$.  Again because the nodes in $T$ are coding, it does not matter which $\omega$-sequence converging to $t$ we pick, since any two of them will project to interleaved descending sequences in $\bbP$.

    By construction, $\rho$ is an $\omega_1$-continuous map.  To show it is a projection, first suppose $\la\bbQ,q,\sigma\ra$ is a successor node below $\la\bbP,p,\tau\ra$, and $p' \leq \pi(q)$, where $\pi$ is the projection chosen by $\Sigma$.  There is $q' \in \bbQ$ below $q$, and since $\la\bbQ,q',\sigma\ra \leq \la\bbQ,q,\sigma\ra$ and $T$ is dense, there is $\la\bbR,r,\chi\ra \in T$ below $\la\bbQ,q',\sigma\ra$.  If $\pi' : \bbR \to \bbQ$ and $\pi'' : \bbR \to \bbP$ are the projections chosen by $\Sigma$, then $\pi''(r) = \pi \circ\pi'(r) \leq \pi(q') \leq p'$.  If $t$ is a limit node below $\la\bbP,p,\tau\ra$, and $p' \leq \rho(r)$, then there is an $\omega$-sequence $\la\la\bbQ_i,q_i,\sigma_i\ra : i <\omega\ra$ of nodes of $T$ converging to $t$, with $\la\bbQ_0,q_0,\sigma_0\ra = \la\bbP,p,\tau\ra$.  If $\bbQ^*$ is the inverse limit of the $\bbQ_i$, and $q^*$ is defined by $q^*(i) = \inf_{i<j<\omega} \pi_{ji}(q_j)$, where $\pi_{ji}$ is the projection chosen by $\Sigma$, and $\sigma^*$ is forced to be the concatenation of the $\sigma_i$, then $\la\bbQ^*,q^*,\sigma^*\ra$ is one lower bound to the sequence.  Note that $q^*(0) = \rho(t) \geq p'$.  Since the restriction map is a projection, we can find another lower bound by strengthening $q^*$ to $q^{**}$ with $q^{**}(0) \leq p'$.  Since $T$ is dense, there is some $\la\bbR,r,\chi\ra \in T$ below it, and if $\pi : \bbR \to \bbP$ is the projection chosen by $\Sigma$, then we must have $\pi(r) \leq p'$, using the fact that $\la\bbP,p,\tau\ra$ is coding.

    The claim now follows, since $\la\bbP,p,\tau\ra$ forces in $\shi(\omega_1,\kappa)$ that $\tau$ is an initial segment of $\tau_X$. 
\end{proof}

Now let us show Theorem~\ref{omega2}.  Suppose $j : V \to M$ is an elementary embedding with critical point $\kappa$, $j(\kappa) = \lambda$, and $M^{<\lambda} \subseteq M$.  By \cite[Theorem 24.11]{kanamori}, we may assume that $j(\lambda)<(\lambda^+)^V$ and $j[\lambda]$ is cofinal in $j(\lambda)$.

Let $G * H \subseteq \shi(\omega_1,\kappa)*\dot\col(\kappa,{<}\lambda)$ be generic over $V$.  We want to show that the embedding can be lifted to one with domain $V[G*H]$ by forcing further with $\col(\omega_1,\kappa)$.  Let $g \subseteq \col(\omega_1,\kappa)$ be generic over $V[G*H]$.  By the above discussion, $\shi(\omega_1,\kappa)\times\col(\omega_1,\kappa)$ is equivalent to a completely $\omega_1$-closed tree $T'$.  The forcing $T' * \dot\col(\kappa,{<}\lambda)^{\shi(\omega_1,\kappa)}$ is a reasonable $(\omega_1,\lambda)$-collapse.  Let $\la\bbR_\alpha : \alpha < \lambda \ra$ be an increasing sequence of regular suborders witnessing reasonableness, with $\bbR_\alpha = T'$ for $\alpha\leq\kappa$.

 Since $M[G][H][g]$ is a $\lambda$-c.c.\ forcing extension of $M$, $M[G][H][g]$ is ${<}\lambda$-closed in $V[G][H][g]$.  Since $j(\lambda)<(\lambda^+)^V$, $\p(\add(\lambda))^{M[G][H][g]}$ has size $\lambda$ in $V[G][H][g]$.  Therefore, we can inductively build in $V[G][H][g]$ a filter $K \subseteq \add(\lambda)$ that is $\add(\lambda)$-generic over $M[G][H][g]$, with $K \restriction\kappa = \tau_X^G$, where $\tau_X$ is the name for the canonical binary sequence of length $\kappa$ added by $\shi(\omega_1,\kappa)$ as above.
 
Let $\psi : T' * \dot\col(\kappa,{<}\lambda)^{\shi(\omega_1,\kappa)} \to \shi(\omega_1,\lambda)$ be as in Lemma~\ref{absorption}.  Then $\la 1,\tau_X \ra \in [(G\times g)*H]*K$, and $\psi(1,\tau_x) = \la T',1,\tau_X\ra$.  Let $G' \subseteq \shi(\omega_1,\lambda)$ be the generic filter generated by $\psi[[(G\times g)*H]*K]$.  Claim~\ref{mc} implies that $G \subseteq G'$.  Therefore, we may lift the embedding to $j : V[G] \to M[G']$.

The quotient forcing to get from $M[G']$ to $M[[(G\times g)*H]*K]$ is $\lambda$-distributive.  Therefore, $M[G']$ is also ${<}\lambda$-closed in $V[G][H][g]$.  For each $\alpha<\lambda$, let $H_\alpha = H \cap \col(\kappa,{<}\alpha)^{V[G]}$.  We have that $j[H_\alpha] \in M[G']$ for each $\alpha<\lambda$.  Since $\col(\lambda,{<}j(\lambda))$ is $\lambda$-closed in $M[G']$, the union of each $j[H_\alpha]$ is a condition $m_\alpha \in \col(\lambda,{<}j(\lambda))^{M[G']}$.  At this stage, we use the same argument as in \cite{eskewdense} and other works to build in $V[G][H][g]$ a filter $H' \supseteq j[H]$ that is $\col(\lambda,{<}j(\lambda))$-generic over $M[G']$.  Namely, we enumerate all the dense open sets that live in $M[G']$ in ordertype $\lambda$ and meet each one  in a way that is compatible with all $m_\alpha$.  This allows us to lift the embedding once again to $j : V[G][H] \to M[G'][H']$.

In $V[G][H]$, we define an ideal $I$ on $\kappa$ as $\{ A \subseteq \kappa : 1 \Vdash_{\col(\omega_1,\kappa)} \kappa \notin j(A) \}$.  It is easy to see that this is a normal ideal on $\kappa$.  We define a map $e : \p(\kappa)/ I \to \calB(\col(\omega_1,\kappa))$ by $e([A]_I) = || \kappa \in j(A) ||$.  The argument for Claim 45 of \cite{eh} shows that $e$ is actually an isomorphism.  This completes the proof.

\bibliographystyle{amsplain.bst}
\bibliography{compare.bib}
\end{document}